\patchcmd{\@maketitle}{\LARGE \@title}{\fontsize{16}{19.2}\selectfont\@title}{}{}
 \theoremstyle{plain}
 \newtheorem{theorem}{Theorem}[section]
 \newtheorem{lemma}[theorem]{Lemma}
 \newtheorem{corollary}[theorem]{Corollary}
 \newtheorem{proposition}[theorem]{Proposition}
  \newtheorem{question}[theorem]{Question}
 \theoremstyle{definition}
 \newtheorem{definition}[theorem]{Definition}
 \newtheorem{definitionnotation}[theorem]{Notation}
 \theoremstyle{remark}
 \newtheorem{remark}[theorem]{Remark}
\newcommand{\chara}{\operatorname{char}}
\newcommand{\iso}{\operatorname{Iso}}
\newcommand{\sections}{\operatorname{Sections}}
\newcommand{\projection}{\operatorname{pr}}
\newcommand{\mni}{\medskip\noindent}
\title{Pseudo-N\'eron Model and Restriction of Sections II}
\title{%
  Pseudo-N\'eron Model and Restriction of Sections II  \\         
  \large Generalization, Examples and Applications}
\author{Santai Qu}
\date{\today}
\begin{document}

\maketitle

\begin{abstract}
This is the continuation of the article by the author that proves a broader class of families
admitting the theorem of restriction of sections other than Abelian varieties and
gives new examples of pseudo-N\'eron models.  
In this work, we show that the techniques in the first paper give more general 
results and more examples such that the theorem of restriction of sections holds.
Also, we give counter examples to show that the non-existence of rational curves
is not a necessary condition for such theorems.  
As an application, we prove a result for Hodge classes of a certain weight which is similar
to the result about sections.
\end{abstract}

\tableofcontents

\section{Introduction}

This is the second article studying the theorem of restriction of sections.
The starting point of this work is the theorem proved in \cite{Santai}, Theorem 1.3,
which will be shown to fit into a general theorem in this article.
For the history and motivation of the problem of restriction of sections, 
the readers are refered to \cite{Santai}, Section 1.2.

\subsection{Main results about restriction of sections}
To state our main results, we first fix our conventions about curves.

\begin{definition}\label{linesdef}
Let $k$ be an algebraically closed field.  Fix a generically finite, generically unramified morphism $u_0: S\rightarrow \mathbb{P}^n_k$.  We define
\begin{itemize}
\item{an \emph{$u_0$-smooth-curve} of genus-$g$, degree-$d$, is an irreducible smooth curve in $S$ of the form $S\times_{\mathbb{P}^n_k} C_0$ for an irreducible, smooth, genus $g$ and degree $d$ curve $C_0\subset \mathbb{P}^n_k$,}
\item{an \emph{$u_0$-curve-pair} of genus-$(g_0+g_1)$, degree-$(d_0+d_1)$ is a connected curve in $S$ of the form $S\times_{\mathbb{P}^n_k}C$, where $C=C_0\cup C_1$ is a pair of curves in $\mathbb{P}^n_k$ intersecting transversally at a single closed point such that
$C_0$ (resp. $C_1$) is an irreducible, smooth curve of genus $g_0$ (resp. $g_1$) and degree $d_0$ (resp. $d_1$).}
\end{itemize}
\end{definition}

Let $k$ be an uncountable algebraically closed field.  We say a subset of a scheme is \emph{general}, resp. \emph{very general}, if the subset contains an open dense subset, resp. the intersection of a countable collection of open dense subsets.  We say that a property of points in a scheme holds \emph{at a general point}, resp. \emph{at a very general point}, if the set where the property holds is a general subset, resp. a very general subset.

In the rest of this paper, we assume that
$k$ be is an algebraically closed field of characteristic zero,  
and that $u_0: S\rightarrow \mathbb{P}^n_k$ is a finite and \'etale morphism onto an open dense 
subset of $\mathbb{P}^n_k$.

In the work \cite{GJ}, Tom Graber and Jason Starr proved that a section of an Abelian scheme over $S$
can be detected by its restriction on a very general conic curve or a line-pair, where a 
line-pair is just a curve-pair of genus-0, degree-2 as in Definition~\ref{linesdef}.
The notation $\sections(A/S)$ will represent the set of sections of $A$ over $S$.

\begin{theorem}(\cite{GJ}, Theorem 1.3, p.312)\label{maintheoremjason}
Let $k$ be an uncountable algebraically closed field.  Let $S$ be an integral, normal, quasi-projective $k$-scheme of dimension $b\ge 2$.  Let $A$ be an Abelian scheme over $S$.  For a very general line-pair $C$ in $S$, the restriction map of sections
\[\sections(A/S)\to\sections(A_C/C)\]
is a bijection.  The theorem also holds with $C$ a very general planar surface in $S$.  If $\chara k=0$, this also holds with $C$ a very general conic in $S$.
\end{theorem}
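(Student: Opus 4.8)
The plan is to use that the fibrewise group law makes both $\sections(A/S)$ and $\sections(A_C/C)$ abelian groups and the restriction map a homomorphism, so that bijectivity amounts to injectivity plus surjectivity. I would organise the three variants around the surface case: a very general planar surface $T=S\times_{\mathbb{P}^n_k}\mathbb{P}^2$ contains very general line-pairs and, in characteristic zero, very general conics, and a line-pair degenerates flatly to a smooth conic inside the linear system $|\mathcal{O}_{\mathbb{P}^2}(2)|$. So it suffices to prove the bijection for $T\hookrightarrow S$ and then, working inside the fixed surface $T$, for a curve $\hookrightarrow T$.

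For injectivity I would first split off the fixed (isotrivial) part $A^{\mathrm{fix}}\subseteq A$: a nonzero constant section coming from $A^{\mathrm{fix}}$ is nowhere vanishing, so it cannot restrict to zero on any curve. On the trace-zero quotient $A'=A/A^{\mathrm{fix}}$ the Lang--N\'eron theorem makes $\sections(A'/S)$ finitely generated, hence countable. For each nonzero $\tau'\in\sections(A'/S)$ the vanishing locus $Z_{\tau'}=\{s:\tau'(s)=0\}$ is closed by separatedness of $A'/S$ and proper since $\tau'\neq 0$; a very general line-pair passes through a general point, so avoids the countable union $\bigcup_{\tau'\neq 0}Z_{\tau'}$ (here uncountability of $k$ is used) and is contained in no $Z_{\tau'}$. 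Hence a section of $A$ restricting to zero on such a $C$ has trivial image in $\sections(A'/S)$ and is therefore fixed, but a nonzero fixed section never vanishes; so the kernel is trivial.

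Surjectivity is the substantive part. I would first reduce from $S$ to the surface $T$ by descending induction on $\dim S$, the inductive step being a Lefschetz-type statement: restriction $\sections(A/S)\to\sections(A_{S'}/S')$ to a very general hyperplane section $S'=S\times_{\mathbb{P}^n_k}\mathbb{P}^{n-1}$ is a bijection. Injectivity is already available, and surjectivity here should follow by interpreting sections as points of the Mordell--Weil group of the generic fibre together with Grothendieck--Lefschetz for the fundamental group, which prevents a new section from appearing on $S'$. Iterating brings us down to the surface $T$.

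On $T$ the real work is extending a section $s_C\colon C\to A_C$ over a line-pair to all of $T$. I would deform $C$ in a covering family by the comb technique --- attaching free rational curves as teeth to $C$ and smoothing --- so as to obtain a family $\{C_t\}$ of curves sweeping out $T$ along which $s_C$ propagates. The decisive geometric input is rigidity of the abelian scheme: since the fibres of $A$ contain no rational curves, every deformation of the image $s_C(C)\subset A$ projects isomorphically to the corresponding deformation of $C$, so the propagated data stay genuine sections with no spurious rational components. The sections over the various $C_t$ agree on overlaps by the injectivity already proved and glue, using properness of $A/T$ and normality of $T$, to a single section over $T$ restricting to $s_C$. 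The main obstacle I anticipate is exactly this propagation-and-gluing step: controlling the monodromy of the family of curves and using the no-rational-curves rigidity together with the comb smoothing to guarantee that the locally constructed sections are mutually compatible and extend across the finitely many degenerate members.
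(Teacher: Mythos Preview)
The paper does not contain a proof of this theorem: it is quoted as Theorem~1.3 of \cite{GJ} (Graber--Starr) and used as input, not as something (re)proved here. So there is no ``paper's own proof'' to compare your proposal against; the present article treats this result as a black box and instead proves the generalizations (Theorem~\ref{generalizedjasonmaintheorem3} and its consequences) by the pseudo-N\'eron/specialization machinery of \cite{Santai}.

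That said, your sketch is in the right neighborhood of the actual Graber--Starr argument. The reduction to a surface, the use of the abelian group structure to split bijectivity into kernel and cokernel questions, the Lang--N\'eron finiteness to make the kernel argument work against countably many bad loci, and above all the observation that abelian varieties contain no rational curves (so deformations of section-images stay sections) are exactly the ingredients in \cite{GJ}. Where your outline is thinnest is precisely where the real work lies: the ``propagation-and-gluing'' step is not a straightforward monodromy/overlap argument but is handled in \cite{GJ} via a careful specialization of sections over a family of curves together with a Hilbert-scheme countability argument, and the reduction from $S$ to the planar surface does not need Grothendieck--Lefschetz for $\pi_1$ but rather the same specialization mechanism run one dimension up. If you want to see a genuinely different route (over $\mathbb{C}$), the paper also points to \cite{najm}, which replaces the deformation-theoretic surjectivity argument by a Hodge-theoretic one via the cycle class map into $\mathrm{H}^1(S,\mathbb{V})$.
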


However, when we pass to a finite cover of the Abelian scheme, 
we can not hope that Theorem~\ref{maintheoremjason} still holds for conic curves.
Instead, after increasing the degree of curves, the article \cite{Santai} proved the following theorem.

\begin{theorem}(\cite{Santai}, Theorem 1.9)\label{generalizedjasonmaintheorem}
Let $k$ an uncountable algebraically closed field of characteristic zero.  Let $S$ be an integral, normal, quasi-projective $k$-scheme of dimension $b\ge 2$.  Let $X$ be a smooth $S$-scheme admitting a finite morphism $f: X\rightarrow A$ to an Abelian scheme $A$ over $S$.  Let $e$ be the fiber dimension of $\iso(A)$ where $\iso(A)$ is the isotrivial factor of $A$ (see \cite{Santai}, Definition-Lemma 3.21, for the definition of $\iso(A)$).  Let $d$ be a positive even integer.  

Then, for $d>2e$, and a very general genus-0 and degree-$(d+2)$ curve-pair or a smooth curve $C$, 
the restriction of sections
\[\sections(X/S)\to \sections(X_C/C)\]
is a bijection. 
\end{theorem}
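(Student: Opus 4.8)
The plan is to factor the restriction map through the Abelian scheme $A$, reducing the statement to two problems: controlling sections of $A$ itself, where Theorem~\ref{maintheoremjason} and the isotrivial structure enter, and lifting a section of $A$ across the finite morphism $f$. Every $\sigma\in\sections(X/S)$ pushes forward to $f\circ\sigma\in\sections(A/S)$, compatibly with restriction to $C$, so the square relating the two restriction maps (vertical arrows induced by $f$, bottom row $\sections(A/S)\to\sections(A_C/C)$) commutes. Given a section $s\in\sections(A/S)$, the sections of $X/S$ lying over $s$ are exactly the sections of the finite $S$-scheme $Y\eqdef s^*X$, and in characteristic zero the finite morphism $f$ between the smooth schemes $X$ and $A$ is generically \'etale, so $Y\to S$ is finite and generically \'etale; this is the object whose sections we must compare over $C$ and over $S$.

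For injectivity, suppose $\sigma_1,\sigma_2$ restrict to the same section over $C$. Then $f\sigma_1,f\sigma_2$ restrict identically over $C$, and the detection of Abelian-scheme sections by a very general genus-0 curve of this degree (the content of Theorem~\ref{maintheoremjason}, whose conic/line-pair case is enhanced to degree $d+2$ by degenerating the rational curve to a chain of conics) forces $f\sigma_1=f\sigma_2=:s$. Thus both $\sigma_i$ are sections of $Y=s^*X$. Over the \'etale locus of $Y\to S$ each section image is open and closed, hence a connected component; distinct components remain disjoint after restriction to $C$, so agreement over $C$ singles out a single component and yields $\sigma_1=\sigma_2$.

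For surjectivity, take $\tau\in\sections(X_C/C)$ and set $s_C\eqdef f\circ\tau$. By Theorem~\ref{maintheoremjason} for $A$ the section $s_C$ extends to a unique $s\in\sections(A/S)$, and $\tau$ becomes a section of $Y_C/C$ for $Y=s^*X$. Let $Y'\subseteq Y$ be the connected component meeting the image of this section. For very general $C$ the restriction $Y'_C\to C$ is irreducible and birational onto $C$, so $Y'\to S$ is finite and birational onto the normal integral scheme $S$; Zariski's Main Theorem then forces $Y'\to S$ to be an isomorphism, and composing its inverse with $Y'\hookrightarrow X$ produces the section of $X/S$ restricting to $\tau$.

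The main obstacle, and the step where the hypothesis $d>2e$ is used, is the comparison of connected components of $Y$ over $C$ with those over $S$: one must show that a very general genus-0 curve of degree $d+2$ sees the full monodromy of the cover $Y\to S$. Writing $B\subset A$ for the branch locus of $f$, this is the Lefschetz-type assertion that the image of $\pi_1\bigl(C\setminus s^{-1}(B)\bigr)$ in the monodromy group of $Y\to S$ equals that of $\pi_1\bigl(S\setminus s^{-1}(B)\bigr)$, so that $C$-components correspond bijectively to $S$-components and the component $Y'$ above is genuinely birational over all of $S$. The failure mode is concentrated in the isotrivial factor $\iso(A)$: after a finite \'etale base change it becomes a constant abelian variety of dimension $e$, into which any rational curve maps to a single point, so a genus-0 curve of too-low degree cannot propagate the lift in the $e$ isotrivial directions. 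The inequality $d>2e$ is exactly what makes a very general genus-0 curve of degree $d+2$---through its higher-order intersections and, in the curve-pair case, the two branches meeting at the node---flexible enough to generate these $e$ directions and thereby realize the full monodromy. Verifying this surjectivity of monodromy is the technical heart, and I expect it to be the hardest part of the argument.
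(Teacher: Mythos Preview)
Your high-level factoring---push sections of $X$ to $A$ via $f$, invoke the Abelian-scheme result for $A$, then lift across the finite cover $s^*X\to S$---is indeed the skeleton of the argument in \cite{Santai}; this two-step structure is exactly the content of the matching-condition Lemma~\ref{matching}. Where your proposal has a genuine gap is the lifting step. You reduce it to a Lefschetz-type monodromy comparison for $\pi_1$ and then admit you do not know how to prove it; the heuristic that $d>2e$ lets a rational curve ``generate the $e$ isotrivial directions'' is not a proof, and in fact no such direct monodromy argument appears in the paper or its predecessor.

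The actual mechanism is quite different. Fix $b\in S$ and track, inside the fiber $A_b$, the locus $\mathcal{M}_{r,b}$ of \emph{bad} points $p$---those for which some $\sigma\in\sections_b^p(A/S)$ fails one of the two conditions in Definition~\ref{goodcurves}. The proof is an induction on $r$: attach one more conic to the existing curve, choose the attaching point very generally, and show via a transverse-intersection argument that the new bad locus is strictly lower-dimensional than the old one. The isotrivial factor $\iso(A)$ controls the initial dimension, so after more than $e$ attachments the bad locus is empty and Lemma~\ref{matching} applies to every $p$; this is precisely where $d>2e$ enters. Two ingredients you do not mention are essential to make this run: pseudo-N\'eron models (Theorem~\ref{neronhigh}) allow sections over smooth curves to specialize to sections over curve-pairs, so the curve-pair induction transfers back to irreducible curves (Proposition~\ref{inductionI}); and the discreteness results (Lemmas~\ref{sectionsovergeneralcurve}--\ref{sectionsoverbase}) ensure the bad loci are countable unions of closed sets, so that ``very general'' suffices. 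Your Zariski's-Main-Theorem endgame is fine once the component comparison is known, but that comparison is the whole difficulty, and the paper resolves it by dimension-counting rather than by $\pi_1$.
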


In this article, we give a generalization of Theorem~\ref{generalizedjasonmaintheorem}.  
Our theorem provides more examples
(cf. Theorem~\ref{generalizedjasonmaintheorem2} and Corollar~\ref{hypersurfaces})
than Abelian schemes and schemes admitting finite morphisms to Abelian schemes for the theorem of restriction of sections.

\begin{theorem}\label{generalizedjasonmaintheorem3}
Let $k$ an uncountable algebraically closed field of characteristic zero.  
Let $S$ be an integral, normal, quasi-projective $k$-scheme of dimension $b\ge 2$.  
Let $X$ and $Y$ be smooth, projective $S$-schemes with a finite morphism $f: X\to Y$.
Let $e$ be the fiber dimension of $Y\to S$.

Assume that $Y\to S$ satisfies the following condition:

\begin{enumerate}[label=(\roman*)]
\item{$Y$ has a normal pseudo-N\'eron model $\widetilde{Y}$ over $W$ (cf. Section~\ref{basicfactonneronmodels}),}
\item{every geometric fiber $Y_{\overline{s}}$, $s\in S$, does not contain any rational curve, and}
\item{for a very general genus-$g_Y$, degree-$d_Y$, $u_0$-curve $C$ in $S$, the restriction map of sections
\[\sections(Y/S)\to \sections(Y_C/C)\] 
is a bijection.}
\end{enumerate} 

Then, for $g=rg_Y+g_Y$ and $d=rd_Y+d_Y$ with $r>e$, 
and a very general genus-$g$ and degree-$d$ curve-pair or a smooth curve $C$, 
the restriction of sections
\[\sections(X/S)\to \sections(X_C/C)\]
is also a bijection.
\end{theorem}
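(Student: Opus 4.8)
The plan is to deduce the statement for $X$ from hypothesis (iii) for $Y$ by pushing a section down along the finite map $f$, extending it over $S$, and then re-lifting it to $X$; the pseudo-N\'eron model and the absence of rational curves are what control the two delicate steps. Throughout I work with the curve-pair model $C=C_0\cup C_1$ meeting transversally at one point $p$, where I take $C_0$ to be the genus-$g_Y$, degree-$d_Y$ component (so that (iii) applies to it) and $C_1$ the auxiliary genus-$rg_Y$, degree-$rd_Y$ component; the smooth-curve case follows by the usual specialization to this nodal model together with N\'eron extension across the node. Injectivity is the easy direction: if two sections of $X/S$ agree on $C$, their agreement locus is closed in $S$ (as $X\to S$ is separated) and contains $C$, and since a very general $u_0$-curve passes through a very general point of $S$ it lies in no fixed proper closed subset, so the agreement locus is all of $S$ and, $S$ being reduced, the sections coincide.

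For surjectivity, fix $\sigma\in\sections(X_C/C)$ and set $\tau=f_C\circ\sigma\in\sections(Y_C/C)$. Restricting to the component $C_0$, which for a very general curve-pair is a very general genus-$g_Y$, degree-$d_Y$ $u_0$-curve, hypothesis (iii) produces a unique $\overline{\tau}\in\sections(Y/S)$ with $\overline{\tau}|_{C_0}=\tau|_{C_0}$. The heart of the matter is to check that $\overline{\tau}$ also agrees with $\tau$ on the auxiliary component, i.e. $\overline{\tau}|_{C_1}=\tau|_{C_1}$. Both are sections of $Y_{C_1}/C_1$ that agree at the node $p$, and here the numerical hypothesis $r>e$ enters: the degree $rd_Y$ and genus $rg_Y$ of $C_1$ make the twist sufficiently negative to force the vanishing $H^{0}(C_1,\tau^{*}T_{Y/S}(-p))=0$, using that the geometric fibres $Y_{\overline{s}}$ contain no rational curves so that $T_{Y/S}$ carries no unwanted positivity along sections. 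This vanishing rigidifies the space of sections of $Y_{C_1}/C_1$ through $\tau(p)$, and the pseudo-N\'eron model $\widetilde{Y}$ supplies the properness that upgrades infinitesimal rigidity to honest uniqueness and guarantees that the extension stays inside $Y$ rather than meeting $\widetilde{Y}\setminus Y$ (again it is the absence of rational curves that identifies the two). We conclude $\overline{\tau}|_C=\tau$.

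With $\overline{\tau}$ in hand, lifting back to $X$ becomes a question about the finite $S$-scheme $Z:=X\times_{Y,\overline{\tau}}S$. By construction $\sigma$ is a section of $Z_C:=Z\times_S C$ over $C$, and a global lift is exactly a section of $Z/S$. Since we are in characteristic zero with $X,Y$ smooth and $f$ finite, $Z\to S$ is \'etale over a dense open $S^{\circ}\subseteq S$, so sections of $Z|_{S^{\circ}}$ are the $\pi_1(S^{\circ})$-fixed sheets of the associated finite $\pi_1(S^{\circ})$-set. The sheet cut out by $\sigma$ over $C^{\circ}:=C\cap S^{\circ}$ is $\pi_1(C^{\circ})$-fixed, and by the Lefschetz-type surjectivity $\pi_1(C^{\circ})\twoheadrightarrow\pi_1(S^{\circ})$ for very general $u_0$-curves (valid because $b\ge 2$) it is then $\pi_1(S^{\circ})$-fixed, giving a section over $S^{\circ}$. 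Taking the closure of its graph in $Z$ yields an irreducible closed subscheme that is finite and birational over the normal scheme $S$, hence (by Zariski's main theorem) isomorphic to $S$; this is the graph of a section $s\in\sections(X/S)$. Unwinding the construction gives $f\circ s=\overline{\tau}$ and $s|_C=\sigma$.

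The main obstacle is the matching step $\overline{\tau}|_{C_1}=\tau|_{C_1}$ in the second paragraph, where all three hypotheses on $Y$ must be used at once: the bound $r>e$ for the cohomological rigidity, the non-existence of rational curves to pin the section down rather than leave a positive-dimensional family, and the pseudo-N\'eron model to convert rigidity into uniqueness and to keep the extension inside $Y$. A secondary difficulty is the smooth-curve case, which requires degenerating a smooth genus-$g$, degree-$d$ $C$ to the nodal pair $C_0\cup C_1$ and extending sections across the node via the N\'eron property; and one must also verify that the \'etale locus $S^{\circ}$ can be chosen to meet the very general $C$ in the way the $\pi_1$-argument needs, which is where generality of $C$ relative to the branch locus of $f$ is invoked.
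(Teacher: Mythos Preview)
Your high-level decomposition (push $\sigma$ down along $f$, extend over $S$ via hypothesis~(iii), then lift back through the finite scheme $Z=X\times_{Y,\overline\tau}S$) matches the paper's Lemma~\ref{matching}, but the argument has a genuine gap, and the role of the bound $r>e$ is misplaced.

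\textbf{The matching step.} Your justification for $\overline\tau|_{C_1}=\tau|_{C_1}$ via the vanishing $H^{0}(C_1,\tau^{*}T_{Y/S}(-p))=0$ does not hold up. The degree of $\tau^{*}T_{Y/S}$ on $C_1$ depends on the section $\tau$, not on the genus or degree of $C_1$; increasing $r$ does not make this bundle more negative. Even if the vanishing held, it gives only that the section is infinitesimally rigid, not unique: the paper's Lemma~\ref{sectionsovergeneralcurve} shows precisely that there are \emph{at most countably many} sections of $Y_{C_1}/C_1$ through a given point, and the pseudo-N\'eron property does not collapse this countable set to a singleton. The correct way to obtain the matching step is to propagate hypothesis~(iii) from $Y$-curves to higher-degree curve-pairs using the pseudo-N\'eron model of $Y$ (the construction of $\mathcal W$ in Notation~\ref{www} via Proposition~\ref{inductionI}); this is itself a nontrivial induction which you have not supplied.

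\textbf{The lifting step and the circularity.} The real obstruction, and the true reason for $r>e$, lies here rather than in the matching step. Your $\pi_1$-argument for $Z\to S$ requires that $C$ be general with respect to the branch locus of $Z\to S$. But $Z$ depends on $\overline\tau$, which is determined only \emph{after} $C$ and $\sigma$ are chosen; you cannot choose $C$ generically relative to data that depends on $C$. Concretely: fix $b\in S$; as $p\in Y_b$ varies over the $e$-dimensional fibre, the sections $\overline\tau$ through $p$ (countably many for each $p$) sweep out an $e$-parameter family of finite covers $Z\to S$, each with its own branch locus, and no single curve lies in the Bertini-good locus for all of them simultaneously. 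This circularity is exactly what the paper's ``bad set'' machinery (Definition~\ref{badsets}) resolves: for a single $Y$-curve the set of \emph{bad points} $p\in Y_b$ is a proper closed subset, and the core of the proof is the inductive dimension drop
\[
\dim\mathcal M_{1,b_1}>\dim\mathcal M'_{2,b_1}\ge\dim\mathcal M_{2,b_1}>\cdots
\]
obtained by attaching $Y$-curves one at a time and intersecting with transversal loci. After $r>e$ attachments the bad set is empty and your lifting argument becomes valid for every $\sigma$. Without this induction, your proposal does not explain why any single very general $C$ works for all sections at once.
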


\subsection{Main results on examples}

Even though Theorem~\ref{generalizedjasonmaintheorem3} gives more examples
than Abelian schemes and schemes admitting finite morphisms to Abelian schemes, the non-existence of 
rational curves is actually not necessary for the theorem of restriction of sections.
Every geometric fiber of an Abelian scheme or its finite covers does not admit
any rational curve.  However, there are examples which also satisfies the theorem of 
restriction of sections but every geometric fiber has infinitely many rational curves.

\begin{theorem}\label{examplerc1}
Let $k$ be an uncountable algebraically closed field of characteristic zero.
Let $S$ be an open dense subset of $\mathbb{P}_k^n$, $n\ge 2$, of codimension at least two.
Then, there exists families of quasi-projective surfaces $X\to S$ such that 
for a very general line-pair, or a very general smooth conic curve $C$ in $S$, 
the restriction of sections
\[\sections(X/S)\to \sections(X_C/C)\]
is bijective.  Moreover, for every $k$-point $s\in S$, $X_s$ admits infinitely many 
non-proper rational curves.
\end{theorem}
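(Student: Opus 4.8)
The plan is to realize $X$ as a constant family of semi-abelian surfaces, exploiting the fact that a semi-abelian variety carries many non-proper rational curves (its $\Gm{m}$-orbits) yet not a single proper one. Fix an elliptic curve $E/k$, put $G := \Gm{m}\times E$ --- a smooth quasi-projective group surface over $k$ --- and set $X := G\times_k S$ with its first projection to $S$. (One may equally take the split torus $G=\Gm{m}\times\Gm{m}$.) Each fiber $X_s$ is isomorphic to $G$, so both assertions reduce to statements about the single surface $G$ together with a computation of the two section groups.

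First I would record the rational curves in $G$. For each $p\in E(k)$ the slice $\Gm{m}\times\{p\}$ is a smooth, non-proper rational curve, and these are pairwise disjoint; hence every fiber $X_s\cong G$ contains infinitely many non-proper rational curves. The complementary --- and decisive --- observation is that $G$ contains \emph{no} proper rational curve: any morphism $\mathbb{P}^1\to\Gm{m}$ is constant because a global unit on $\mathbb{P}^1$ is a scalar, and any morphism $\mathbb{P}^1\to E$ is constant because $E$ has no rational curve, so every morphism $\mathbb{P}^1\to G$ is constant. It is this dichotomy --- infinitely many non-proper rational curves but not one proper curve --- that makes the restriction of sections bijective while violating hypothesis (ii) of Theorem~\ref{generalizedjasonmaintheorem3}.

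Next I would compute the sections. A section of $X/S$ is a morphism $S\to G$, so $\sections(X/S)=\mor(S,G)=\mathcal{O}(S)^{\ast}\times\mor(S,E)$. Since $S=\mathbb{P}^n_k\setminus Z$ with $\codim_{\mathbb{P}^n_k}Z\ge 2$ and $\mathbb{P}^n_k$ is normal, Hartogs' theorem gives $\mathcal{O}(S)=\mathcal{O}(\mathbb{P}^n_k)=k$, whence $\mathcal{O}(S)^{\ast}=k^{\ast}$. For the elliptic factor, a morphism $S\to E$ is a rational map $\mathbb{P}^n_k\dashrightarrow E$ from a smooth variety to an abelian variety and therefore extends to a morphism on all of $\mathbb{P}^n_k$, which must be constant because $\mathbb{P}^n_k$ is rationally connected whereas $E$ has no rational curve. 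Thus $\mor(S,G)=G(k)$: every section is constant. Running the same two inputs over a very general smooth conic $C\cong\mathbb{P}^1$, or over a very general line-pair $C=L_1\cup L_2$ meeting at a point $p$ (both of which, being general, avoid $Z$ and so lie in $S$), yields $\mor(C,G)=G(k)$ as well, since on each $\mathbb{P}^1$-component a unit and a map to $E$ are constant and the two values are forced to agree at $p$. The restriction map $G(k)\to G(k)$ is then the identity, hence bijective.

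The one genuinely load-bearing step is the identity $\mathcal{O}(S)^{\ast}=k^{\ast}$, and this is exactly where the hypothesis $\codim Z\ge 2$ is indispensable: were $Z$ to contain a divisor, $S$ would admit non-constant units, $\mor(S,\Gm{m})$ would strictly exceed $k^{\ast}$, and injectivity of the restriction would fail. By contrast, the extension across $Z$ for the abelian factor $E$ needs no codimension hypothesis, since a rational map to an abelian variety is automatically a morphism. Granting normality of $\mathbb{P}^n_k$ and $\codim Z\ge2$, all remaining verifications are routine, and the constant semi-abelian family $X=G\times_k S$ simultaneously exhibits infinitely many non-proper rational curves in each fiber and a bijective restriction of sections.
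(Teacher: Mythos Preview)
Your argument is correct and entirely self-contained, but it follows a genuinely different route from the paper. The paper builds $X$ from a (possibly non-constant) Abelian surface scheme $A\to S$: it forms the singular Kummer quotient $Y=A/\iota$, lets $X$ be the complement of the sixteen sections of nodes, and then argues in two steps. For the rational curves it invokes the Bogomolov--Hassett--Tschinkel result that smooth Kummer K3s carry infinitely many rational curves, pushes these down to $Y$, and observes that each one must meet the singular locus, so on $X$ they become non-proper. For the section statement it lifts a section $\gamma$ of $X_C/C$ through the \'etale double cover $A'\to X$ (the conic $C\cong\mathbb{P}^1_k$ being simply connected, the pullback splits), applies the Graber--Starr restriction theorem for Abelian schemes to extend each lift uniquely over $S$, and checks that the two extensions are exchanged by $\iota$ and hence descend to a single section of $X/S$.

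Your construction trades all of this machinery for a direct computation: a constant semi-abelian family has only constant sections over both $S$ and $C$ by Hartogs plus the extension theorem for rational maps to abelian varieties, and the $\Gm{m}$-slices supply the non-proper rational curves for free. This is shorter, needs neither the uncountability of $k$ nor the ``very general'' hypothesis on $C$, and makes transparent exactly where $\codim Z\ge 2$ enters. What the paper's approach buys is a non-constant family whose fibers are genuinely two-dimensional (not a product of curves) and which sits naturally alongside the projective Kummer example of Theorem~\ref{examplerc2}; it also illustrates the lifting-through-an-\'etale-cover technique that the paper wishes to highlight. As a bare proof of the existence statement, however, your example is perfectly adequate and arguably cleaner.
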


A more careful construction gives examples with infinitely many complete rational curves on 
the geometric fibers.

\begin{theorem}\label{examplerc2}
Let $k$ be an uncountable algebraically closed field of characteristic zero.
Let $U$ and $B$ be smooth, irreducible and quasi-projective $k$-schemes of dimension $\ge 2$.
Assume that $u_0: U\to \mathbb{P}_k^n$ is a generically finite and generically \'etale
morphism onto an open dense subset of $\mathbb{P}_k^n$.
Suppose that $f_0: B\to U$ is a finite, surjective morphism of degree two.

Then, there exists families of singular, projective Kummer surfaces $X\to B$
such that for a very general line-pair, or a very general smooth conic curve $C$ in $B$, 
the restriction of sections
\[\sections(X/B)\to \sections(X_C/C)\]
is bijective.  Moreover, for every $k$-point $s\in B$, $X_s$ admits infinitely many 
complete rational curves.
\end{theorem}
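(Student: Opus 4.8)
The plan is to realize each fiber as the singular Kummer surface of an abelian surface built from an elliptic scheme, and to read off the complete rational curves directly from the elliptic subcurves. Concretely, I would start from a relative elliptic curve $E\to B$ (using the degree-two cover $f_0\colon B\to U$ together with the generically \'etale $u_0$ to pull back a suitable elliptic scheme), form the abelian surface scheme $A=E\times_B E\to B$, and set $X=A/[-1]$, the fiberwise quotient by inversion. Each geometric fiber $X_s=(E_s\times E_s)/[-1]$ is then a projective singular Kummer surface with sixteen $A_1$-singularities at the images of $A_s[2]$. For the rational curves I would use that $E_s\times E_s$ contains infinitely many elliptic subcurves (the images of $E_s$ under $(a,b)\colon x\mapsto(ax,bx)$ and their translates); since the quotient of any elliptic curve by $[-1]$ is $\mathbb{P}^1$, the image $q(F)\cong\mathbb{P}^1$ of each such subcurve $F$ under $q\colon A_s\to X_s$ is a complete rational curve, and distinct subcurves give distinct curves. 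This furnishes the required infinitely many complete rational curves on every fiber.

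For the restriction of sections I would reduce everything to the abelian scheme $A\to B$, for which Theorem~\ref{maintheoremjason} already gives that restriction to a very general conic or line-pair is bijective. The bridge is the degree-two quotient $q\colon A\to X$: it induces $q_*\colon\sections(A/B)\to\sections(X/B)$ with $q_*(s)=q_*(-s)$, and the analogous map over $C$, fitting into a square that commutes with the two restriction maps. The heart of the argument is to show that this square transports bijectivity from $A$ to $X$. For injectivity, two sections of $X/B$ agreeing on $C$ pull the branched cover $q$ back to $B$; after passing to the degree-two cover supplied by $f_0$ the pullback splits, the sections lift to sections of $A/B$ agreeing on the lift of $C$, and injectivity for $A$ finishes the claim. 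For surjectivity, given a section $\sigma_C$ of $X_C/C$ I would lift it to $A_C$---here the obstruction is a $2$-torsion class that is killed either because $C$ is rational or by the chosen degree-two structure---extend the lift over $B$ by Theorem~\ref{maintheoremjason}, and push forward by $q$ to a section of $X/B$ restricting to $\sigma_C$.

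The principal difficulty is precisely the interaction between the branch locus of $q$ and the abundance of rational curves. Because $X$ is singular along the sixteen $2$-torsion sections and $q$ is ramified there, lifting a section of $X$ to $A$ is governed by a class with $\mathbb{Z}/2$ coefficients, and a priori a section of $X_C/C$ might exploit one of the infinitely many rational curves $q(F)$ to produce data with no counterpart over $B$. The degree-two cover $f_0\colon B\to U$ is introduced exactly to trivialize this obstruction and force every relevant section to descend from $A$; verifying that it does so, and that very generality of $C$ prevents the rational curves $q(F)$ from spreading to spurious sections over $C$, is the step I expect to require the most care. Once the lifting and descent are established the bijection follows formally from the abelian case, so the whole argument hinges on making the degree-two splitting and the genericity of $C$ interact correctly with the node sections.
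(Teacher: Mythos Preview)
Your construction of $X$ as a Kummer quotient and your explicit production of complete rational curves from the elliptic subcurves of $E_s\times E_s$ are both sound; the paper does not spell out the rational-curve count in Section~6.2 and implicitly leans on the Bogomolov--Hassett--Tschinkel result cited in the previous subsection, so your argument here is, if anything, more direct.

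The gap is exactly where you locate it, and the mechanism you propose does not close it. You want to lift a section $\sigma_C\colon C\to X$ to the abelian scheme $A$ by splitting the pulled-back double cover $\sigma_C^*A\to C$, claiming the $\mathbb{Z}/2$ obstruction dies ``either because $C$ is rational or by the chosen degree-two structure.'' But $C$ is \emph{not} rational: in the paper's conventions a conic or line-pair in $B$ means the preimage under $B\to U\to\mathbb{P}^n_k$ of a conic or line-pair in $\mathbb{P}^n_k$, and since $f_0$ has degree two and $u_0$ is only generically \'etale, this preimage generically has positive genus. And you give no concrete link between the base cover $f_0\colon B\to U$ and the fiberwise double cover $q\colon A\to X$; as written, $f_0$ enters only in building $E$, and nothing forces the relevant class in $H^1(C,\mathbb{Z}/2)$ to vanish.

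The paper takes a different route: it never lifts to the abelian scheme whose quotient is $X$. Instead it uses $f_0$ to produce a \emph{second} abelian scheme $A$ over $B$ together with a degree-two map $f\colon A'\to A$, where $X=A'/\iota$. Given $\gamma\colon C\to X$, one pulls back to $\gamma'\colon C'\to A'$ (so $C'\to C$ is the tautological double cover) and then composes with $f$ to land in $A$. The two degree-two maps interact so that the image $f\circ\gamma'(C')\subset A$ is the graph of a genuine section $\gamma_1\colon C\to A$. Now Theorem~\ref{maintheoremjason} applies to the abelian scheme $A\to B$, extending $\gamma_1$ to a global $\sigma_1$; the pullback $B'=A'\times_{f,A,\sigma_1}B$ is $\iota$-stable, and $B'/\iota\subset X$ is the desired section over $B$. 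So $f_0$ is used not to trivialize a lifting obstruction but to manufacture an auxiliary abelian target along which one \emph{pushes forward} rather than lifts.
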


\subsection{Application in Hodge theory}

Now, we take the ground field $k$ as the field of complex numbers $\mathbb{C}$.
In \cite{najm}, a special case of Theorem~\ref{maintheoremjason} is proved by
using Hodge theory.  The idea is that we can separate the problem into two cases:
isotrivial Abelian scheme; Abelian scheme that does not contain any isotrivial part.
Then, Hodge theory and the theory of Lefschetz pencils give the proof when $A\to S$
is isotrivial.  

Let $p: A\to S$ be an Abelian scheme of relative dimension $n$
and consider the local system $\mathbb{V}=\text{R}^{2n-1}p_*\mathbb{Q}$. 
Assume that $S$ has a normal projective compactification $\overline{S}$
such that $S$ in $\overline{S}$ is of codimension at least two.
Then, the key point for the proof of the second case is the following lemma.

\begin{lemma}(\cite{najm}, Lemma 2.6)
Let $p: A\to S$ be an Abelian scheme that does not contain any isotrivial 
Abelian subschemes.  Denote by $A(S)$ the group of sections of $A$ over $S$.
Then, via the cycle class map, we can identify $A(S)\otimes_{\mathbb{Z}}\mathbb{Q}$
with the group of Hodge classes of type $(n,n)$ of $\text{H}^1(S, \mathbb{V})$.
\end{lemma}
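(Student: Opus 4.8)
The plan is to realize the asserted bijection as the connecting homomorphism of a short exact sequence of sheaves on $S^{\mathrm{an}}$, and to identify that sequence with the family of intermediate Jacobians attached to $\mathbb{V}$. First I would record the Hodge-theoretic meaning of $\mathbb{V}=\mathrm{R}^{2n-1}p_*\mathbb{Q}$. Since each fibre $A_s$ is an abelian variety of dimension $n$, there is a canonical isomorphism $\mathrm{H}^{2n-1}(A_s,\mathbb{Q})\cong \mathrm{H}^1(A_s,\mathbb{Q})^{\vee}(-n)$, so after a Tate twist $\mathbb{V}(n)$ is a polarizable variation of Hodge structure of weight $-1$, and the classical computation of the last intermediate Jacobian of an abelian variety identifies the associated family of intermediate Jacobians $\mathcal{J}(\mathbb{V}(n))\to S$ with an abelian scheme isogenous to $A$. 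In particular its group of holomorphic sections agrees with $A(S)\otimes_{\mathbb{Z}}\mathbb{Q}$ after tensoring. Concretely, the cycle class map of the statement sends a section $\sigma$ to the component, under the Leray decomposition $\mathrm{H}^{2n}(A,\mathbb{Q})=\bigoplus_p \mathrm{H}^p(S,\mathrm{R}^{2n-p}p_*\mathbb{Q})$ (which degenerates by Deligne's theorem, as $p$ is smooth and proper), of the class $[\sigma(S)]-[0(S)]$ of the codimension-$n$ cycle $\sigma(S)$; its $\mathrm{H}^0$-component vanishes because $\sigma$ meets every fibre in a single point, and I will argue that this map coincides with the topological invariant $\delta$ introduced below.

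Next I would write down the exponential-type sequence of sheaves on $S^{\mathrm{an}}$ associated with the weight $-1$ variation $\mathbb{V}(n)$,
\[0\to \mathbb{V}(n)_{\mathbb{Z}}\to \mathcal{V}/F^0\to \mathcal{J}\to 0,\]
where $\mathcal{V}=\mathbb{V}(n)\otimes\mathcal{O}_S$ and $\mathcal{J}$ is the sheaf of sections of $\mathcal{J}(\mathbb{V}(n))$. Taking cohomology, the connecting homomorphism $\delta\colon \mathrm{H}^0(S,\mathcal{J})\to \mathrm{H}^1(S,\mathbb{V}(n)_{\mathbb{Z}})$ is exactly the cycle class (the topological invariant of a normal function), where $\mathrm{H}^0(S,\mathcal{J})\otimes\mathbb{Q}\cong A(S)\otimes\mathbb{Q}$ by the previous paragraph; being a connecting map of abelian sheaf cohomology it is automatically a group homomorphism. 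Its kernel is the intermediate Jacobian $J\big(\mathrm{H}^0(S,\mathbb{V}(n))\big)$ of the invariant subvariation. Here the hypothesis that $A$ has no isotrivial abelian subscheme enters: by the theorem of the fixed part $\mathrm{H}^0(S,\mathbb{V})=\mathbb{V}^{\pi_1(S)}$ is controlled by the maximal constant abelian subscheme of $A$ and hence vanishes, so $\ker\delta=0$ and $\delta\otimes\mathbb{Q}$ is injective.

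It remains to identify $\operatorname{im}(\delta)\otimes\mathbb{Q}=\ker\big(\mathrm{H}^1(S,\mathbb{V}(n)_{\mathbb{Q}})\to \mathrm{H}^1(S,\mathcal{V}/F^0)\big)$ with the space of Hodge classes of type $(n,n)$, and this is the main obstacle. A priori $\mathrm{H}^1(S,\mathbb{V})$ carries only a mixed Hodge structure with weights $\ge 2n$. I would use the hypothesis that $S$ admits a normal projective compactification $\overline{S}$ with $\overline{S}\setminus S$ of codimension $\ge 2$ to show, via the Gysin and local-cohomology sequences along the boundary, that $\mathrm{H}^1(S,\mathbb{V})$ is \emph{pure} of weight $2n$ and coincides with the pure Hodge structure computed on $\overline{S}$. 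Granting purity, strictness of the Hodge filtration shows that $\mathrm{H}^1(S,\mathbb{V}(n)_{\mathbb{C}})\to \mathrm{H}^1(S,\mathcal{V}/F^0)$ has kernel exactly $F^0\mathrm{H}^1$, so that the rational classes in $\ker(\delta_{\mathbb{C}})$ are precisely those in $F^0\cap\overline{F^0}$, i.e. the type $(0,0)$ classes of the weight-$0$ structure $\mathrm{H}^1(S,\mathbb{V})(n)$, equivalently the type $(n,n)$ Hodge classes of $\mathrm{H}^1(S,\mathbb{V})$.

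Combining the three steps, $\delta\otimes\mathbb{Q}$ yields the desired isomorphism $A(S)\otimes_{\mathbb{Z}}\mathbb{Q}\xrightarrow{\ \sim\ }\mathrm{Hdg}^{n,n}\big(\mathrm{H}^1(S,\mathbb{V})\big)$. The hardest point is the purity statement together with surjectivity onto \emph{all} Hodge classes: the codimension $\ge 2$ hypothesis is exactly what removes the boundary contributions that would otherwise enlarge the weight filtration and obstruct lifting a given Hodge class to a normal function. I would therefore expect most of the work to go into establishing that $\mathrm{H}^1(S,\mathbb{V})$ is pure, with the remaining identifications being formal consequences of the normal function exact sequence and the vanishing of the invariants $\mathbb{V}^{\pi_1(S)}$.
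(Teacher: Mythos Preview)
The paper does not give its own proof of this lemma: it is quoted from \cite{najm} purely as background for Theorem~\ref{h1}, with no argument supplied. So there is nothing in the present paper to compare your proposal against.

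That said, your outline is the standard route (and is in the spirit of Fakhruddin's argument): realise $\mathbb{V}(n)$ as a polarizable VHS of weight $-1$, identify the associated family of intermediate Jacobians with an abelian scheme isogenous to $A$, and read off both injectivity and the description of the image from the long exact sequence attached to $0\to\mathbb{V}(n)_{\mathbb{Z}}\to\mathcal{V}/F^0\to\mathcal{J}\to 0$. You have also correctly isolated where each hypothesis is used: ``no isotrivial abelian subscheme'' kills the monodromy invariants via the theorem of the fixed part, and the codimension-$\ge 2$ compactification is what forces $\mathrm{H}^1(S,\mathbb{V})$ to be pure, so that the image of $\delta$ is exactly the rational $(n,n)$-classes.

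One point to tighten when you write this out in full: the kernel of $\delta$ is the image of $\mathrm{H}^0(S,\mathcal{V}/F^0)$ in $\mathrm{H}^0(S,\mathcal{J})$, and it is not automatic that $\mathrm{H}^0(S,\mathcal{V}/F^0)$ vanishes merely from $\mathrm{H}^0(S,\mathbb{V})=0$ (the coherent sheaf $\mathcal{V}/F^0\cong\mathrm{Lie}(\hat A/S)$ can have many global sections over a quasi-projective $S$). What saves you is that this image is a divisible subgroup of $A(S)\otimes\mathbb{Q}$, while by Lang--N\'eron the absence of an isotrivial factor makes $A(S)$ finitely generated; hence the image is zero. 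So the ``no isotrivial subscheme'' hypothesis is doing a bit more work than just killing $\mathbb{V}^{\pi_1(S)}$, and you should make that explicit.
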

The main application of Hodge theory in this problem is that, for a very general 
curve $C$ in $S$ with sufficiently large degree, the restriction map
\[\text{H}^1(S, \mathbb{V})\to \text{H}^1(C, \mathbb{V}|_C)\]
is a bijection on Hodge classes of type $(n,n)$ (cf. \cite{najm}, Proposition 4.1).
We prove the converse of this result in Section~\ref{applicationinhodgetheory}, not only for families of Abelian varieties.

\begin{theorem}\label{h1}
Let $S$ be a smooth, quasi-projective $\mathbb{C}$-variety of dimension $b\ge 2$.
Let $p: X\to S$ be a smooth, surjective and projective morphism of relative dimension $n$.
Denote by $\mathbb{V}$ the local system $\text{R}^{2n-1}p_*\mathbb{Q}$ on $S$.
Let $C$ be a curve in $S$ such that the restriction map of sections
\[\sections(X/S)\to \sections(X_C/C)\]
is bijective.  Then, the restriction map
\[\text{H}^1(S, \mathbb{V})\to \text{H}^1(C, \mathbb{V}|_C)\]
is a bijection on Hodge classes of type $(n,n)$.
\end{theorem}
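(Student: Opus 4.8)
The plan is to convert the statement about $H^1$ into a statement about sections of an abelian scheme -- the relative intermediate Jacobian of the family -- and then to feed in the hypothesis through the Abel--Jacobi map. First I would record the crucial structural fact that the variation $\mathbb{V}=R^{2n-1}p_*\mathbb{Q}$ has level one: fibrewise, $H^{2n-1}(X_s,\mathbb{Q})$ is a weight-$(2n-1)$ Hodge structure carrying only the two Hodge types $(n,n-1)$ and $(n-1,n)$, so by relative Poincar\'e duality and the polarization $\mathbb{V}\cong (R^1p_*\mathbb{Q})(1-n)$. Consequently the relative intermediate Jacobian $\mathcal{J}=J^n(X/S)\to S$ is an abelian scheme over $S$ (the relative Albanese up to isogeny), and its sheaf of normal functions fits in the standard exact sequence $0\to\mathbb{V}_{\mathbb{Z}}\to \mathcal{V}/F^n\mathcal{V}\to\mathcal{J}\to 0$, where $\mathcal{V}=\mathbb{V}\otimes_{\mathbb{Q}}\mathcal{O}_S$ and $F^n$ is the Hodge filtration.

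Taking cohomology of this sequence, the normal-function class map of Lemma 2.6 of \cite{najm}, applied to the abelian scheme $\mathcal{J}$ (splitting off the isotrivial part as there, so that $\mathcal{J}(S)$ is finitely generated by Lang--N\'eron, and matching the Tate twist relating the two Hodge types), identifies $\mathcal{J}(S)\otimes\mathbb{Q}$ with the Hodge classes of type $(n,n)$ in $H^1(S,\mathbb{V})$. The same identification holds verbatim over $C$ for $\mathcal{J}_C=\mathcal{J}|_C$, and since the class map is functorial for the inclusion $C\hookrightarrow S$ it carries the restriction map on Hodge classes to the restriction of sections
\[
\mathcal{J}(S)\otimes\mathbb{Q}\longrightarrow \mathcal{J}_C(C)\otimes\mathbb{Q}.
\]
Thus it suffices to prove that this restriction of sections of the abelian scheme $\mathcal{J}$ is a bijection. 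The relative Albanese morphism $a\colon X\to\mathcal{J}$ (rigidified by a base section, or taken via the difference map $X\times_S X\to\mathcal{J}$ when no section is available) induces $\sections(X/S)\to\mathcal{J}(S)$ and $\sections(X_C/C)\to\mathcal{J}_C(C)$, and these fit into a square commuting with the two restriction maps whose top row is bijective by hypothesis.

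The main obstacle is that the Albanese map on sections is in general \emph{neither} injective (its kernel records fibrewise Albanese-trivial $0$-cycles) \emph{nor} surjective (a normal function need not be the Abel--Jacobi image of a difference of honest sections of $X$), so the commutative square does not by itself transport bijectivity from the top row to the bottom. To close this gap I would treat the two halves separately. For injectivity of the restriction on $\mathcal{J}(S)\otimes\mathbb{Q}$ I would use that $\mathcal{J}$ has no isotrivial part together with the theorem of the fixed part (Deligne, Zucker): a normal function on $S$ whose restriction to $C$ has vanishing class is forced, by semisimplicity of the restriction functor, to have vanishing class on all of $S$, hence to be torsion. For surjectivity I would lift a Hodge class on $C$ to a normal function, represent it by a relative $0$-cycle, and reduce the resulting multisection to the section hypothesis by a trace and base-change argument; equivalently, one observes that the very general curve $C$ furnishing the bijection for $X$ also satisfies the hypotheses of the restriction-of-sections theorem (Theorem~\ref{maintheoremjason}) for the abelian scheme $\mathcal{J}$, yielding the surjectivity directly. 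I expect surjectivity to be the genuinely delicate point, as it is exactly where the passage from honest sections of $X$ to arbitrary normal functions -- the generality beyond abelian schemes -- must be controlled.
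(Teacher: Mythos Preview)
Your route through the relative intermediate Jacobian is a different and more elaborate strategy than the paper's. The paper argues directly with cycle classes: it forms the square
\[
\xymatrix{
\text{Z}(X/S)\ar[d]\ar[r] & \text{Z}(X_C/C)\ar[d] \\
\text{Hdg}^1(\mathbb{V})\ar[r] & \text{Hdg}^1(\mathbb{V}|_C)
}
\]
with $\text{Z}(X/S)$ the $\mathbb{Q}$-span of images of sections and the vertical arrows the cycle class maps landing in the Leray piece $H^1(-,\mathbb{V})\subset H^{2n}$ (via the decomposition theorem). It then asserts the vertical maps are surjective by Lefschetz $(1,1)$ --- this implicitly uses exactly your level-one observation $\mathbb{V}\cong R^1p_*\mathbb{Q}(1-n)$, so that $(n,n)$-classes in $H^1(-,\mathbb{V})$ become $(1,1)$-classes in $H^2$. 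With the top row bijective by hypothesis and the bottom row injective by \cite{najm}, Remark~2.7, a diagram chase finishes.

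Your surjectivity argument, however, has a gap. The hypothesis of the theorem is that a \emph{given} curve $C$ makes $\sections(X/S)\to\sections(X_C/C)$ bijective; nothing says $C$ is very general. So your fallback of applying Theorem~\ref{maintheoremjason} to the abelian scheme $\mathcal{J}$ is unavailable --- that theorem needs $C$ to be a very general conic or line-pair. And since, as you correctly note, the Albanese map $\sections(X/S)\to\mathcal{J}(S)$ need not be surjective, the bijectivity hypothesis on $X$-sections does not transfer to $\mathcal{J}$-sections; your ``trace and base-change'' alternative is left as a sketch precisely at the point where the work lies. The paper avoids all of this by never passing to $\mathcal{J}$: sections of $X$ itself are sent to Hodge classes, and Lefschetz $(1,1)$ is invoked for the particular $C$ at hand, with no genericity needed.
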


\subsection{Outline of the paper}

One of the key technical ingredients of Theorem~\ref{generalizedjasonmaintheorem3} is
the application of pseudo-N\'eron models.
We will give a brief review for the basic definitions and results we need about pseudo-N\'eron models 
in Section~\ref{basicfactonneronmodels}.  One of the technical ingredients in the proof of 
Theorem~\ref{generalizedjasonmaintheorem3} is that there are at most countably many 
section when we fix the image of a point.  These results are proved in Section~\ref{discretenessforthespaceofsections}.
In Section~\ref{generalsetupforrestrictionofsections}, we give a complete proof 
of Theorem~\ref{generalizedjasonmaintheorem3}.  Many parts of the proof are 
the same as the proof of Theorem~\ref{generalizedjasonmaintheorem}, so we 
only keep track of the main ideas in the proof of this article.  
For completeness and the convenience of readers, we include the proof of the main theorem
even though that is literally the same as in \cite{Santai}.
The most complicated part is to construct the various parameter spaces, 
which will be done in Section~\ref{variousparameterspaces}.
Next, Section~\ref{resultsbasedonconstantfamilies} aims to give an
application of Theorem~\ref{generalizedjasonmaintheorem3}.  Thus, we
get more examples for the theorem of restriction of sections than families
of Abelian varieties and varieties admitting finite morphisms to Abelian varieties.
In Section~\ref{examplewithrationalcurves12}, we give the proof of 
Theorem~\ref{examplerc1} and Theorem~\ref{examplerc2}.
And, finally, in Section~\ref{applicationinhodgetheory}, we give the proof 
of Theorem~\ref{h1}.

\vspace{1em}
\noindent{\bf Acknowledgement:}  The author is very grateful to his advisor Prof. Jason Michael Starr 
for his support during the proof.  The author also thanks Qianyu Chen for the disscusion about Hodge theory.

\section{Preliminaries for Pseudo-N\'eron Models}\label{basicfactonneronmodels}

In this section, we give a quick review of the results about pseudo-N\'eron models,
see \cite{Santai} for details and proof.

\begin{definition}\label{defweak}(\cite{GJ} Definition 4.10)
Let $S$ be an integral, regular, separated, Noetherian scheme of dimension $b\ge 1$.  A \emph{flat}, finite type, separated morphism $X\rightarrow S$ has the \emph{weak extension property} if for every triple ($Z\rightarrow S, U, s_U$) of
\begin{enumerate}[label=(\roman*)]
\item a smooth morphism $Z\rightarrow S$,
\item a dense, open subset $U\subset S$,
\item and an $S$-morphism $s_U:Z\times_SU\rightarrow X_U$,
\end{enumerate}
there exists a pair $(V,s_V)$ of 
\begin{enumerate}[label=(\roman*)]
\item an open subset $V\subset S$ containing U and all codimension 1 points of S,
\item and an S-morphism $s_V: Z\times_SV\rightarrow X$ whose restriction to $Z\times_SU$ is equal to $s_V$.
\end{enumerate}
\end{definition}

\begin{definition}(\cite{Santai}, Definition 3.2)\label{defmodel}
Let $S$ be an integral, regular, separated, Noetherian scheme of dimension $b\ge 1$.  Let $K$ be the fraction field of $S$, and $X_K$ be a smooth, separated $K$-scheme of finite type.  A flat, finite type, separated $S$-scheme $X$ is called a \emph{pseudo-N\'eron model} of $X_K$ if $X_K$ is isomorphic to its generic fiber and $X$ satisfies the weak extension property as in Definition~\ref{defweak}.
\end{definition}

Note that we do not require a pseudo-N\'eron model is smooth over $S$.  
It will be unique up to a unique isomorphism if it is smooth, and it is 
just the usual notion of N\'eron model.  It is well-known that every
Abelian variety has N\'eron model (cf. \cite{BLR}).  But if we pass to a finite cover 
of an Abelian variety, we can not hope to get its N\'eron model, only
a pseudo-N\'eron model.

\begin{theorem}(\cite{Santai}, Theorem 3.5)\label{neronhigh}
Suppose that $S$ is an integral, regular, separated, Noetherian Nagata scheme of dimension $b\ge 1$ with fraction field $K$. 
Let $X_K$ be a smooth scheme admitting finite $K$-morphism to a smooth, separated variety $Y_K$ of finite type which has a normal pseudo-N\'eron model $Y$ over $S$.  Then $X_K$ has a normal pseudo-N\'eron model $X$ over $S$.  

The theorem also holds if $X_K$ and $Y_K$ are replaced by some $X_U$ and $Y_U$ defined over a dense open $U$ of $S$.
\end{theorem}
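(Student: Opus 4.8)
The plan is to realise $X$ as the relative normalization of $Y$ in $X_K$, and then to deduce the weak extension property of $X$ from that of $Y$ together with the universal property of normalization. We may assume $f_K\colon X_K\to Y_K$ is dominant (this holds in all the intended applications; in general one replaces $Y_K$ by the scheme-theoretic image). Writing $j\colon Y_K\hookrightarrow Y$ for the generic fibre, let $\mathcal{A}\subset j_*(f_K)_*\mathcal{O}_{X_K}$ be the subsheaf of elements integral over $\mathcal{O}_Y$; this is a quasi-coherent $\mathcal{O}_Y$-algebra, and I set $X=\spec_Y\mathcal{A}$. Because $S$ is Nagata, so is $Y$, hence $\mathcal{A}$ is finite over $\mathcal{O}_Y$; thus $X\to Y$ is finite, $X\to S$ is of finite type and separated, and $X$ is normal by construction. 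Localizing at the generic point of $S$ shows that $\mathcal{A}|_{Y_K}$ is the integral closure of $\mathcal{O}_{Y_K}$ in the function algebra of $X_K$; since $X_K$ is smooth, hence normal, and finite over $Y_K$, this recovers $(f_K)_*\mathcal{O}_{X_K}$, so the generic fibre of $X$ over $S$ is $X_K$ as required.

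For flatness I would argue first in codimension $\le 1$. Since $Y\to S$ is flat and, at every codimension-$\le 1$ point $s$ of the regular scheme $S$, the ring $\mathcal{O}_{S,s}$ is a field or a DVR, the base change $X\times_S\spec\mathcal{O}_{S,s}$ is the normalization of the flat, normal $\mathcal{O}_{S,s}$-scheme $Y\times_S\spec\mathcal{O}_{S,s}$ in $X_K$; its irreducible components are the closures of the components of the generic fibre and hence all dominate $\spec\mathcal{O}_{S,s}$, so it is torsion-free and therefore flat over the DVR. Thus $X\to S$ is flat in codimension $\le 1$, and its non-flat locus maps into a closed subset $T\subset S$ with $\codim_S T\ge 2$. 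Passing to the (open) flat locus of $X$ changes nothing over codimension-$\le 1$ points of $S$, which is all that the weak extension property sees.

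The heart of the argument is the weak extension property. Given a triple $(Z\to S,\,U,\,s_U)$ as in Definition~\ref{defweak}, I compose $s_U$ with $f\colon X\to Y$ to obtain an $S$-morphism $Z\times_S U\to Y_U$. Since $Y$ is a pseudo-N\'eron model it has the weak extension property, which yields an open $V\subset S$ containing $U$ and all codimension-$1$ points of $S$, together with an $S$-morphism $t_V\colon Z\times_S V\to Y$ extending $f\circ s_U$. As $Z\to S$ is smooth and $S$ is regular, $Z$ is regular, so $W:=Z\times_S V$ is normal. Working one connected (hence integral, normal) component of $W$ at a time and using that $X\to Y$ is affine, a lift $W\to X$ over $t_V$ is exactly an $\mathcal{O}_Y$-algebra homomorphism $\mathcal{A}\to (t_V)_*\mathcal{O}_W$. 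Such a homomorphism is already given by $s_U$ over the dense open $Z\times_S U\subset W$: for a local section $a$ of $\mathcal{A}$, the rational function $s_U^{\#}(a)$ on $W$ satisfies the monic integral equation of $a$ over $\mathcal{O}_Y$ pushed forward along $t_V$, so it is integral over $\mathcal{O}_W$ and, $W$ being normal, lies in $\mathcal{O}_W$. This extends the algebra homomorphism over all of $W$ and produces an $S$-morphism $s_V\colon Z\times_S V\to X$ lying over $t_V$; since $s_V$ and $s_U$ agree on the dense reduced subscheme $Z\times_S U$, uniqueness of such extensions forces $s_V|_{Z\times_S U}=s_U$.

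Finally, one must check that $s_V$ can be taken into the flat model: its preimage of the non-flat locus lies in $W\times_S T$, which has codimension $\ge 2$ in $W$ because $Z\to S$ is smooth. I expect this interaction between flatness and lifting to be the main obstacle, namely arranging that the model is simultaneously flat over $S$ and large enough that every $t_V$ admits an extension to it, i.e. that discarding the codimension-$\ge 2$ non-flat locus does not destroy the weak extension property. The variant over a dense open $U\subset S$ (with $X_U,Y_U$) follows by running the identical construction and argument with $S$ replaced by $U$ throughout.
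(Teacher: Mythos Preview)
This paper does not itself prove Theorem~\ref{neronhigh}; Section~\ref{basicfactonneronmodels} merely quotes the statement from \cite{Santai} and refers the reader there for the argument, so there is no in-paper proof to compare against. That said, your construction---take $X$ to be the relative normalization of $Y$ in $X_K$, use the Nagata hypothesis for finiteness of $\mathcal{A}$, compose $s_U$ with $f$ and invoke the weak extension property of $Y$ to obtain $t_V$, then lift $t_V$ through $f$ by the universal property of integral closure applied to the regular (hence normal) scheme $Z\times_S V$---is the standard argument and is almost certainly what \cite{Santai} does.

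Your self-flagged worry about flatness is legitimate but not a genuine obstruction. Once you know the non-flat locus of $X\to S$ lies over a closed $T\subset S$ of codimension $\ge 2$, pass to the open flat locus $X'\subset X$; it is normal, separated, finite type, flat over $S$, and has generic fibre $X_K$. For the weak extension property of $X'$: given $s_U\colon Z\times_S U\to X'_U$, run your lifting argument to produce $s_V\colon Z\times_S V\to X$; then $s_V^{-1}(X\setminus X')$ is disjoint from $Z\times_S U$ (over which $s_V=s_U$ already lands in $X'$) and lies over $T$, so replacing $V$ by $V'=U\cup(V\setminus T)$ retains $U$ and every codimension-$1$ point of $S$ while forcing $s_{V'}$ to land in $X'$. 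This is also consistent with how the result is actually used later in the paper, where the pseudo-N\'eron model $\widetilde{X}$ is only taken over an open $\widetilde{S}\subset W$ containing all codimension-one points (cf.\ Section~\ref{thespaceofsections}), not literally over all of $W$.
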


\section{Discreteness for The Space of Sections}\label{discretenessforthespaceofsections}

\begin{lemma}\label{sectionsovercurve}
Let $k$ be an algebraically closed field.  Let $C$ be a smooth projective curve.  
Denote by $\pi: X\to C$ a smooth projective morphism such that for a closed point $b\in C$, 
the fiber $X_b$ does not contain any rational curve.  Then, for every closed point $p\in X_b$, 
there are at most countably many sections of $\pi$ mapping $b$ to $p$.
\end{lemma}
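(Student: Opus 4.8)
The plan is to parametrize the sections and reduce the statement to a dimension count. First I would use that the sections of $\pi$ are the $k$-points of the relative morphism scheme $\mathrm{Sec}(X/C)=\mathrm{Hom}_{C}(C,X)$, which is locally of finite type over $k$ and is a countable disjoint union $\bigsqcup_{d}\mathrm{Sec}^{d}(X/C)$ of quasi-projective $k$-schemes, indexed by the degree of $\sigma$ against a fixed polarization on the projective $k$-scheme $X$. Evaluation at $b$ defines a morphism $\mathrm{ev}_{b}\colon\mathrm{Sec}(X/C)\to X_{b}$, $\sigma\mapsto\sigma(b)$, and the sections we must count are exactly the $k$-points of the closed subscheme $\Sigma_{p}:=\mathrm{ev}_{b}^{-1}(p)$, again a countable union of quasi-projective pieces. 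Since a finite-type $k$-scheme of dimension zero has only finitely many closed points, it suffices to prove $\dim\Sigma_{p}=0$: then each piece carries finitely many sections and the total is countable, regardless of the cardinality of $k$.

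Arguing by contradiction, suppose some component of $\Sigma_{p}$ has positive dimension. I would then pick an irreducible curve inside it, normalize, and choose a smooth projective model $T$; pulling back the universal section yields a family $g^{\circ}\colon T^{\circ}\times C\to X$ with $\pi\circ g^{\circ}=\mathrm{pr}_{C}$ and $g^{\circ}(t,b)=p$ for every $t$, where $T^{\circ}\subseteq T$ is the locus of definition and the sections $\sigma_{t}:=g^{\circ}|_{\{t\}\times C}$ are pairwise distinct. Because $T\times C$ is a smooth projective surface and $X$ is projective, the induced rational map $\overline g\colon T\times C\dashrightarrow X$ becomes a morphism $\tilde g\colon W\to X$ after finitely many point blow-ups $\mu\colon W\to T\times C$ centered over the finite set $(T\setminus T^{\circ})\times C$. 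The structural fact I would lean on is the identity $\pi\circ\tilde g=\mathrm{pr}_{C}\circ\mu$, valid because both sides agree on the dense open $T^{\circ}\times C$ and $C$ is separated; in particular every $\mu$-exceptional curve lies over a single point $(\infty,c)$ and is therefore sent by $\tilde g$ into the single fiber $X_{c}$.

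The crucial point is to force the degeneration to sit over $c=b$ and to pass through $p$, and here I would invoke the rigidity lemma. Suppose first that $\overline g$ is regular at $(\infty,b)$ for every boundary point $\infty\in T\setminus T^{\circ}$. Since the indeterminacy locus is finite, one can then find a neighborhood $V\ni b$ in $C$ with $\overline g$ regular on all of $T\times V$; as $\overline g$ contracts the proper fiber $T\times\{b\}$ to $p$, rigidity applied to $T\times V\to V$ (with connected proper fibers $T$) forces $\overline g$ to contract $T\times\{c\}$ for all $c$ in a neighborhood of $b$, so the $\sigma_{t}$ agree on a nonempty open subset of $C$ and hence everywhere --- contradicting their distinctness. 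Thus $\overline g$ is indeterminate at some $(\infty_{0},b)$. Resolving that point, the exceptional fiber $F=\mu^{-1}(\infty_{0},b)$ is a connected tree of rational curves which $\tilde g$ does not entirely contract, and by the identity above $\tilde g(F)\subseteq X_{b}$. Finally the strict transform of $T\times\{b\}$ meets $F$ and is contracted to $p$, so $p\in\tilde g(F)$; as $\tilde g(F)$ is a connected union of rational curves in $X_{b}$ through $p$, the fiber $X_{b}$ contains a rational curve through $p$. This contradicts the hypothesis, so $\dim\Sigma_{p}=0$ and the lemma follows.

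I expect the main obstacle to be exactly this localization: an ordinary bend-and-break would only produce a rational curve in some fiber, whereas the content of the lemma is that the fixed value $p\in X_{b}$ confines the broken curve to $X_{b}$ itself. The rigidity step is what excludes the competing possibility that the sections limit to a genuine section through $p$ without ever breaking, and the identity $\pi\circ\tilde g=\mathrm{pr}_{C}\circ\mu$ is what prevents the broken rational curve from escaping the fiber $X_{b}$. The argument uses no positive-dimensional deformation theory and no characteristic hypothesis, which matches the stated generality over an arbitrary algebraically closed $k$.
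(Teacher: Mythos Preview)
Your proof is correct but takes a genuinely different route from the paper's. The paper first replaces $C$ by an irrational finite cover $\tau\colon C'\to C$ (sections of $\pi$ pull back injectively along the fppf morphism $\tau$) and then invokes bend-and-break as a black box (Debarre, Proposition~3.11): a positive-dimensional family of maps $C'\to X'$ fixing $b'\mapsto p'$ produces a rational curve in $X$ through $p$, and because $C'$ is irrational any rational curve in $X$ projects to a point of $C'$, hence lies in the fiber $X_b$. You instead unwind the bend-and-break mechanism directly over the given $C$, compactifying the one-parameter family to a rational map $T\times C\dashrightarrow X$, resolving it, and using the rigidity lemma to force an indeterminacy to sit over $b$. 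The payoff is the identity $\pi\circ\tilde g=\mathrm{pr}_C\circ\mu$, which confines the exceptional rational tree to $X_b$ without any irrationality hypothesis on $C$, so the base-change step becomes unnecessary. The paper's argument is shorter because it cites a known lemma; yours is self-contained, works uniformly in the genus of $C$, and makes the fiberwise localization of the broken curve explicit. One small imprecision: the sections $\sigma_t$ need not be literally pairwise distinct after normalizing $T$, only generically distinct; this is all the rigidity contradiction needs.
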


\begin{proof}
Let $C'$ be an irrational smooth projective curve with a finite, flat morphism $\tau: C'\to C$.  
Denote $X'$ the fiber product $X\times_C C'$.  Let $b'\in C'$ be a closed point whose image is $b\in C$, 
and $p'\in X'_{b'}$ a closed point whose image is $p\in X_b$.  
Then, for every section $\sigma$ of $\pi$ mapping $b$ to $p$, 
the base change $\sigma'$ of $\sigma$ via $\tau$ is a section of $X'\to C'$ mapping $b'$ to $p'$.  
Moreover, since $\tau$ is a fppf morphism, the map of sections
\[\sections_b^p(X/C)\to \sections_{b'}^{p'}(X'/C')\]
is injective.  Thus, it suffices to consider the case when $C$ is an irrational curve.

Let $\sigma$ be a section of $\pi$ mapping $b$ to $p$.  If $\dim_{[\sigma]}\text{Mor}(C,X;\sigma|_{\{b\}})$ 
has positive dimension, then there exists a rational curve on the fiber $X_b$ by the bend-and-break lemma 
(\cite{Debarre}, Proposition 3.11, p.70).  Therefore, 
the dimension $\dim_{[\sigma]}\text{Mor}(C,X;\sigma|_{\{b\}})$ must be zero, 
so there are at most countably many sections in $\sections_b^p(X/C)$.
\end{proof}

\begin{proposition}(\cite{gliu}, Proposition 6.2, p.1234)\label{extensionnorationalcurves}
Let $S$ be a Noetherian regular integral scheme, with function field $K$.
Let $X\to S$ be a proper morphism such that no geometric fiber $X_{\overline{s}}$
contains a rational curve.  Then any $K$-rational point of the generic fiber 
of $X\to S$ extends to a section over $S$.
\end{proposition}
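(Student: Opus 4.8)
The plan is to view the $K$-rational point as a rational section $s\colon S\dashrightarrow X$ and to prove that its maximal domain of definition $U$ is all of $S$; the hypothesis that no geometric fibre contains a rational curve is exactly what will let us cross the bad locus. First I would note that a $K$-point of the generic fibre is the same datum as an $S$-rational map $s$ that is a section of $X\to S$ over a dense open $U\subseteq S$, which I take maximal. Since $X\to S$ is proper, hence separated, and $S$ is regular, the local ring of $S$ at every codimension-$1$ point is a discrete valuation ring, so the valuative criterion of properness extends $s$ across each such point. Thus $U$ contains all points of codimension $\le 1$ and $Z:=S\setminus U$ has codimension $\ge 2$. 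Extending $s$ across $Z$ is local on $S$, so I would localize at a generic point $z$ of a component of $Z$ and reduce to the case that $S$ is regular local of dimension $d\ge 2$, that $z$ is its closed point, and that $s$ is defined on the punctured spectrum $S\setminus\{z\}$.

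Next I would form the closure $\overline{\Gamma}\subseteq X$ of the graph of $s|_U$, with projection $\pi\colon\overline{\Gamma}\to S$; it is proper and restricts to an isomorphism over $U$, and because $S$ is normal a proper birational morphism satisfies $\pi_*\mathcal{O}_{\overline{\Gamma}}=\mathcal{O}_S$, so all fibres of $\pi$ are connected. If the fibre $\overline{\Gamma}_z$ is $0$-dimensional it is then a single reduced point, $\pi$ is quasi-finite and proper, hence finite, and being birational onto the normal scheme $S$ it is an isomorphism, whose inverse is the sought section. Since $\overline{\Gamma}\subseteq X$ and $\pi$ is the restriction of $X\to S$, the fibre $\overline{\Gamma}_z$ is contained in the single fibre $X_z$. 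The entire problem therefore reduces to excluding a positive-dimensional $\overline{\Gamma}_z$.

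The heart of the argument is a two-dimensional reduction. Cutting $S$ by $d-2$ general members of a regular system of parameters produces a regular two-dimensional local scheme $T\ni z$ on which $s$ restricts to a section over $T\setminus\{z\}$. On such a regular surface one can eliminate the indeterminacy of this rational section by a finite chain of blow-ups of closed points $\mu\colon\widetilde{T}\to T$, obtaining an honest morphism $\widetilde{s}\colon\widetilde{T}\to X$ lying over $\mu$. The exceptional locus $E=\mu^{-1}(z)$ is a connected union of copies of $\mathbb{P}^1$, and since $\widetilde{s}$ is a morphism over $S$ we have $\widetilde{s}(E)\subseteq X_z$. If $\widetilde{s}$ were non-constant on some component $E_i\cong\mathbb{P}^1$, then $\widetilde{s}|_{E_i}$ would be a rational curve in $X_z$, hence after base change to $\overline{\kappa(z)}$ a rational curve in the geometric fibre $X_{\overline{z}}$, contradicting the hypothesis. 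Therefore $\widetilde{s}$ is constant on the connected set $E$, and by the rigidity lemma it factors through $\mu$, so $s$ extends to a morphism $s_T\colon T\to X$ with a well-defined value $p_T:=s_T(z)\in X_z$.

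Finally I would show that all the limit values $p_T$ coincide, which forces $\overline{\Gamma}_z=\{p_T\}$ and closes the argument by the second paragraph. Every point of $\overline{\Gamma}_z$ is the limit of $s$ along some discrete valuation dominating a curve through $z$, and each such curve lies on a slice $T$; since $s_T$ is a morphism, that limit equals $p_T$. Thus if two slices gave distinct values $p_0\ne p_1$, I would join them by a one-parameter family $\{T_\lambda\}_{\lambda\in\mathbb{P}^1}$ of slices through $z$; the induced assignment $\lambda\mapsto p_{T_\lambda}$ is a rational map $\mathbb{P}^1\dashrightarrow X_z$, which extends to a morphism because $\mathbb{P}^1$ is a smooth curve and $X_z$ is proper, and it is non-constant since $p_0\ne p_1$, again producing a rational curve in $X_{\overline{z}}$ and a contradiction. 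I expect this last step, turning varying limit behaviour into an honest non-constant map from $\mathbb{P}^1$ landing in a single fibre, to be the main obstacle: it is precisely here that the absence of rational curves is indispensable, and it is the same bend-and-break mechanism that underlies Lemma~\ref{sectionsovercurve} (via \cite{Debarre}, Proposition 3.11). Once all $p_T$ agree, $\pi$ is quasi-finite, hence finite and an isomorphism, and $s$ is the desired section over $S$.
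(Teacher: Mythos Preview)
The paper does not give a proof of this proposition; it is simply quoted from \cite{gliu} with no argument, so there is no ``paper's proof'' to compare against.

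Your outline is a standard approach and is correct through the two-dimensional reduction: the passage to a regular local base, the valuative criterion giving codimension $\geq 2$ for the bad locus, the graph closure $\overline{\Gamma}\to S$ with connected fibres by normality of $S$, elimination of indeterminacy on a regular surface slice $T$ by iterated point blow-ups, and the observation that the exceptional tree of $\mathbb{P}^1$'s must map to a single point of $X_z$ under the no-rational-curves hypothesis, so that $s$ extends over each such $T$.

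The gap is in your last paragraph. Two assertions are not justified as written. First, it is not clear that every point of $\overline{\Gamma}_z$ is the limit of $s$ along a curve lying in some two-dimensional slice: a point of $\overline{\Gamma}_z$ is reached, via the valuative criterion, by some DVR mapping to $S$ with closed point over $z$, but the image of that DVR in $S$ need not be one-dimensional, let alone contained in a fixed slice $T$. Second, the assignment $\lambda\mapsto p_{T_\lambda}$ is produced by a resolution procedure on each $T_\lambda$ whose number of blow-ups and whose centers may jump with $\lambda$, so its algebraicity in $\lambda$ is not automatic and needs an argument; as stated you have only a set-theoretic map $\lambda\mapsto p_{T_\lambda}$. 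One clean repair is to blow up $z$ once in $S$ itself: the exceptional divisor is $E\cong\mathbb{P}^{d-1}_{\kappa(z)}$, the lift of $s$ is defined away from codimension $\geq 2$ on the regular scheme $\mathrm{Bl}_z S$, and its restriction to $E$ is therefore an honest rational map $\mathbb{P}^{d-1}\dashrightarrow X_z$, which must be constant since otherwise a general line in $E$ yields a rational curve in $X_{\overline{z}}$. Iterating on the strictly smaller remaining indeterminacy locus then finishes. This replaces your family of slices by a single blow-up on which the ``varying limit'' is visibly algebraic, and closes the gap.
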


Let $k$ be an uncountable algebraically closed field of characteristic zero.  
Let $S$ be a $k$-variety admitting a finite \'etale morphism $u_0$ onto an open dense 
subset $U$ of $\mathbb{P}_k^n$.  Let $u: W\to \mathbb{P}_k^n$ be a smooth 
projective compactification of $S$ (\cite{Santai}, section 3.3.1, p.20).  
Note that the morphism $u$ is surjective, and $S$ is an open dense subset of $W$.  

\begin{lemma}\label{shrink}
Let $\pi: X\to S$ be a smooth projective morphism such that every geometric fiber of $\pi$ does not contain any rational curves.  Let $C$ be a smooth curve in $S$.  
Then, a section $\gamma$ of $X$ over $C$ is contained in a unique section of $X$ over $S$ if and only if
there exists an open dense subset $U\subset S$ such that $\gamma|_{C\cap U}$ is contained in a unique 
section of $X$ over $U$.
\end{lemma}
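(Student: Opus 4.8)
The plan is to reduce everything to the fact that, under the no-rational-curve hypothesis, restriction identifies the sections of $X$ over $S$ with the sections over any dense open $U\subseteq S$. First I would record this identification. Since $\pi$ is projective, hence proper and separated, and no geometric fiber of $\pi$ contains a rational curve, Proposition~\ref{extensionnorationalcurves} applies to $X\to S$: every $K$-point of the generic fiber (equivalently, every section of $X$ over a dense open of $S$) extends to a section of $X$ over $S$, where $K$ is the function field of $S$. Combined with the separatedness of $\pi$ and the integrality of $S$ (two sections of a separated $S$-scheme agreeing on the dense open $U$ agree on all of $S$), this shows that the restriction map
\[\rho_U\colon \sections(X/S)\to\sections(X/U),\qquad \sigma\mapsto \sigma|_U,\]
is a bijection for every dense open $U\subseteq S$. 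The hypotheses of Proposition~\ref{extensionnorationalcurves} are met because $S$ is regular, integral and Noetherian and $\pi$ is proper with no rational curves in its geometric fibers.

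Next I would match the two sides of the claimed equivalence through $\rho_U$. Here I assume $C\cap U$ is dense in $C$; as $C$ is an irreducible smooth curve this holds as soon as $C$ meets $U$, and it holds trivially for $U=S$. I claim $\rho_U$ carries the set $\{\sigma\in\sections(X/S):\sigma|_C=\gamma\}$ bijectively onto $\{\tau\in\sections(X/U):\tau|_{C\cap U}=\gamma|_{C\cap U}\}$. The inclusion of the image into the target is immediate, since $\sigma|_C=\gamma$ forces $(\sigma|_U)|_{C\cap U}=\gamma|_{C\cap U}$. For the reverse, given $\tau$ with $\tau|_{C\cap U}=\gamma|_{C\cap U}$, put $\sigma=\rho_U^{-1}(\tau)$; then $\sigma|_{C\cap U}=\tau|_{C\cap U}=\gamma|_{C\cap U}$, so $\sigma|_C$ and $\gamma$ are two sections of the separated scheme $X_C\to C$ agreeing on the dense open $C\cap U$ of the reduced curve $C$, hence agree on all of $C$. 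Thus $\sigma|_C=\gamma$, and the matching is a bijection.

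With this in hand both implications are short. For the forward direction I would take $U=S$, so that $C\cap U=C$ and the matching bijection is the identity; then the uniqueness of a section over $S$ restricting to $\gamma$ is literally the required statement over $U=S$. For the converse, suppose such a $U$ exists; since the two displayed sets are in bijection under $\rho_U$, the unique section $\sigma_U$ over $U$ restricting to $\gamma|_{C\cap U}$ corresponds to a unique $\sigma=\rho_U^{-1}(\sigma_U)$ over $S$ with $\sigma|_C=\gamma$, which is exactly the assertion that $\gamma$ is contained in a unique section of $X$ over $S$.

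The only genuinely delicate point is the passage from agreement on $C\cap U$ to agreement on all of $C$: this is where I use both the separatedness of $\pi$ and the density of $C\cap U$ in $C$, and it is also why the case $C\subset S\setminus U$ must be excluded (there the hypothesis over $U$ would impose no condition along $C$ and could not recover $\gamma$). Everything else is formal once the bijection $\rho_U$ is established, so the substantive input is Proposition~\ref{extensionnorationalcurves} together with properness and separatedness of $\pi$.
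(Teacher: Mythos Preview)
Your proposal is correct and follows essentially the same route as the paper: both use Proposition~\ref{extensionnorationalcurves} to extend a section over $U$ to all of $S$, then invoke separatedness of $\pi$ to pass from agreement on the dense open $C\cap U$ to agreement on $C$ and to conclude uniqueness. Your version is slightly more explicit in packaging the argument as a bijection $\rho_U$ between section sets and in flagging the implicit assumption that $C\cap U$ be dense in $C$, but the substance is identical.
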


\begin{proof}
Suppose that there exists an open dense subset $U$ of $S$ such that $\gamma|_{C\cap U}$
is contained in a unique section $\sigma$ of $X$ over $U$.  By Proposition~\ref{extensionnorationalcurves},
$\sigma$ extends to a section $\widetilde{\sigma}$ of $X$ over $S$.
Since the restrictions $\widetilde{\sigma}|_C$ and $\gamma$ agree 
on an open dense subset of $C$, by separatedness, $\gamma$
is contained in $\widetilde{\sigma}$.  Also, by separatedness, 
$\widetilde{\sigma}$ is the unique section that contains $\gamma$.
\end{proof}

\begin{lemma}\label{sectionsovergeneralcurve}
Let $\pi: X\to S$ be a smooth projective morphism admitting a pseudo-N\'eron model.  
Suppose that a very general geometric fiber of $\pi$ does not contain any rational curves.  
Then, for a very general smooth $u_0$-curve $C$, a very general closed point $b\in C$ 
and an arbitrary closed point $p\in X_b$, there are at most countably many sections of $X$ over $C$ that map $b$ to $p$.
\end{lemma}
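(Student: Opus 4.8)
The plan is to reduce to the projective situation already handled in Lemma~\ref{sectionsovercurve}, using the compactification $u: W\to\mathbb{P}^n_k$ of $S$ and the pseudo-N\'eron model to pass from the (possibly non-complete) curve $C$ to its smooth projective closure in $W$. First I would record the genericity reductions. For a very general smooth $u_0$-curve $C$, its closure $\overline{C}$ in $W$ is a smooth projective curve (generic smoothness in characteristic zero, applied to $u^{-1}$ of a very general $C_0\subset\mathbb{P}^n_k$) meeting the boundary $W\setminus S$ in only finitely many points, and for a very general closed point $b\in C$ the fiber $X_b$ is a very general fiber of $\pi$, hence contains no rational curve by hypothesis. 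Thus it suffices to bound, for such $C$, $b$ and an arbitrary $p\in X_b$, the set $\sections_b^p(X_C/C)$ of sections of $\pi$ over $C$ sending $b$ to $p$.

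The heart of the argument is to extend both the family and the sections across the finitely many boundary points of $\overline{C}$. Let $\widetilde{X}\to W$ be the normal pseudo-N\'eron model of $X$ (Theorem~\ref{neronhigh}), and let $\overline{X}\to\overline{C}$ be a proper (indeed projective) flat model over $\overline{C}$ restricting to $X_C\to C$, obtained from $\widetilde{X}_{\overline{C}}$ by taking a projective compactification; note $\overline{X}\to\overline{C}$ is smooth over all of $C$, in particular over $b$, while the boundary fibers may be arbitrary. Given $\gamma\in\sections_b^p(X_C/C)$, I would extend it to a section $\overline{\gamma}$ of $\overline{X}$ over the whole projective curve $\overline{C}$. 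This extension is exactly where the pseudo-N\'eron hypothesis is used: applying the weak extension property (Definition~\ref{defweak}) to a smooth family of $u_0$-curves sweeping out $W$ extends sections across the codimension-one boundary points that a very general $\overline{C}$ meets, and at the remaining points the extension is forced by properness and the valuative criterion. By separatedness (as in Lemma~\ref{shrink}) the extension is unique, so restriction gives an injection $\sections_b^p(X_C/C)\hookrightarrow\sections_b^p(\overline{X}/\overline{C})$.

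Finally I would run the bend-and-break argument of Lemma~\ref{sectionsovercurve} on the projective family $\overline{X}\to\overline{C}$. After replacing $\overline{C}$ by an irrational finite cover so that it has no infinitesimal automorphisms (which guarantees that nearby morphisms in $\text{Mor}(\overline{C},\overline{X};\overline{\gamma}|_{\{b\}})$ are again sections), I claim $\dim_{[\overline{\gamma}]}\text{Mor}(\overline{C},\overline{X};\overline{\gamma}|_{\{b\}})=0$: otherwise the bend-and-break lemma (\cite{Debarre}, Proposition 3.11) produces a rational curve through the fixed point $p$ lying in the fiber over $b$, namely in $X_b$, contradicting the choice of $b$. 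The key point is that fixing the point over the \emph{good} point $b\in C$ confines the resulting rational curve to the good fiber $X_b$, so the possibly bad boundary fibers of $\overline{X}$ never enter. Since the Hom/Sec scheme is a countable union of finite-type $k$-schemes, vanishing of this local dimension makes each piece finite, so $\sections_b^p(\overline{X}/\overline{C})$, and hence $\sections_b^p(X_C/C)$, is at most countable.

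The main obstacle I expect is the extension of sections across $\overline{C}\setminus C$: this is precisely what the pseudo-N\'eron hypothesis buys, and care is needed to ensure that the very general curve $\overline{C}$ meets $W\setminus S$ only in points lying in the locus where the weak extension property produces the extension, and that the chosen proper model $\overline{X}$ receives the extended morphism. By contrast the bend-and-break step is essentially routine once one observes that fixing the point over the good point $b$ forces the broken rational curve into $X_b$.
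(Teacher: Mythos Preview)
Your approach is essentially the paper's: compactify $C$ to a smooth projective curve in $W$, extend sections to the compactification, and then feed the result into Lemma~\ref{sectionsovercurve} (bend-and-break after passing to an irrational cover, so that the produced rational curve must live in the fiber $X_b$).

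The one place where the paper is cleaner is the extension step. The paper does not invoke the weak extension property on a sweeping family of curves. Instead it observes that the pseudo-N\'eron model $\widetilde{X}$ is defined over an open $V\subset W$ whose complement has codimension $\ge 2$; hence a \emph{very general} projective curve $C'=W\times_{\mathbb{P}^n_k}C_0$ misses $W\setminus V$ altogether and lies entirely inside $V$. With $C'\subset V$, one extends $\sigma:C\to X\subset\widetilde{X}$ to $\sigma':C'\to\widetilde{X}$ directly from the fact that $C'$ is a smooth curve, and then applies Lemma~\ref{sectionsovercurve} to $\widetilde{X}|_{C'}\to C'$. Your route via a projective compactification $\overline{X}\to\overline{C}$ and the valuative criterion certainly works too, but note that once $\overline{X}\to\overline{C}$ is projective, the valuative criterion already extends every $\gamma$ on its own; the separate appeal to the weak extension property for the ``codimension-one boundary points'' is superfluous. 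In short, the pseudo-N\'eron hypothesis in the paper is used only to guarantee a model over all of $C'$, not to perform the extension of individual sections.
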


\begin{proof}
Since $C$ is a $u_0$-curve, it comes from a smooth curve $C_0$ in $\mathbb{P}_k^n$.  
Let $C'$ be the base change $W\times_{\mathbb{P}_k^n}C_0$ which is a smooth projective curve, 
and contains $C$ as an open dense subset.  Let $\widetilde{X}$ be a pseudo-N\'eron model 
of $X$ over an open dense subset $V$ of $W$.  Denote by $\widetilde{\pi}: \widetilde{X}\to V$ the structural morphism 
of the pseudo-N\'eron model.  Note that $V$ contains all the codimension one points of $W$.  
Take a very general smooth $u_0$-curve $C$ and a very general closed point $b\in C$ such that 
the fiber $X_b$ does not contain any rational curve.  Since $V$ is at least codimension two in $W$, 
we can take that the curve $C'$ does not intersect the closed subset $W\setminus V$, i.e. 
$C'$ is contained in $V$.  Let $\sigma$ be a section of $X$ over $C$ that maps $b$ to $p$.  
Since $C'$ is normal, $\sigma$ extends to a morphism $\sigma': C'\to \widetilde{X}$.  
The restriction of $\widetilde{\pi}\circ \sigma'$ on $C$ is identity.  By separatedness, 
the composition $\widetilde{\pi}\circ \sigma'$ is identity.  So $\sigma'$ is a section of $\widetilde{\pi}$ 
that maps $b$ to $p$.  By Lemma~\ref{sectionsovercurve}, there are at most countably many 
such sections $\sigma'$ that map $b$ to $p$.  
Therefore, also there are countably many section of $X$ over $C$ mapping $b$ to $p$.
\end{proof}

\begin{lemma}\label{sectionsoverbase}
Let $\pi: X\to S$ be a smooth projective morphism admitting a pseudo-N\'eron model.  
Suppose that a very general geometric fiber of $\pi$ does not contain any rational curves.  
Then, for a very general closed point $b\in S$ and an arbitrary closed point $p\in X_b$, 
there are at most countably many sections of $\pi$ that map $b$ to $p$.
\end{lemma}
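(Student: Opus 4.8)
The plan is to represent the sections of $\pi$ by a scheme, turn the countability assertion into a statement about dimension, and then cut down to a curve so that Lemma~\ref{sectionsovergeneralcurve} can be applied. Since $S$ is quasi-projective over $k$ and $\pi$ is projective, $X$ is quasi-projective over $k$, and sections of $\pi$ correspond to those closed subschemes $\Gamma\subset X$ (their graphs) for which $\pi|_\Gamma\colon \Gamma\to S$ is an isomorphism; this is an open condition in $\text{Hilb}_{X/k}$. Hence the sections are parametrized by a scheme $\text{Sec}(X/S)$ that is locally of finite type over $k$ and, being decomposed by Hilbert polynomials, has only countably many connected components, each of finite type. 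Evaluation at $b$ gives a morphism $\text{ev}_b\colon \text{Sec}(X/S)\to X_b$, and $\sections_b^p(X/S)$ is the set of $k$-points of the closed subscheme $Z:=\text{ev}_b^{-1}(p)$, which inherits the same finiteness properties. Because $k$ is uncountable and algebraically closed, a scheme locally of finite type over $k$ with countably many finite-type components has countably many $k$-points exactly when it is zero-dimensional; so it suffices to show that every irreducible component of $Z$ is a point.

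Suppose not. Then there is an irreducible variety $T$ of positive dimension parametrizing pairwise distinct sections $\sigma_t$ of $\pi$ over $S$, all sending $b$ to $p$. First I would take $b$ to be a very general point of $S$ and, for a very general smooth $u_0$-curve $C$ through $b$, arrange that Lemma~\ref{sectionsovergeneralcurve} applies to the pair $(C,b)$, so that $\sections_b^p(X_C/C)$ is countable. Restricting the universal section over $T\times_k S$ to $T\times_k C$ produces a morphism $\rho_C\colon T\to \text{Sec}_b^p(X_C/C)$, $t\mapsto \sigma_t|_C$. Over the uncountable field $k$, the countability of the $k$-points of the target forces it to be zero-dimensional, so the morphism $\rho_C$ from the irreducible $T$ has image a single point. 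Therefore all the $\sigma_t$ restrict to one and the same section over $C$.

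To finish I would let the curve vary. Fixing two distinct members $\sigma_{t_0}\neq\sigma_{t_1}$ of the family, the previous step shows that for every very general smooth $u_0$-curve $C$ through $b$ one has $\sigma_{t_0}|_C=\sigma_{t_1}|_C$, i.e. $C$ lies in the equalizer $\text{Eq}(\sigma_{t_0},\sigma_{t_1})$, which is closed in $S$ by separatedness of $X/S$. Since the very general smooth $u_0$-curves through $b$ sweep out a dense subset of $S$ (through $b$ and a general point there is such a curve), the equalizer is dense, hence all of $S$, forcing $\sigma_{t_0}=\sigma_{t_1}$ — a contradiction. Thus $Z$ is zero-dimensional and $\sections_b^p(X/S)$ is countable.

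The main obstacle I expect is the bookkeeping of ``very general'' between the base point and the curves through it: Lemma~\ref{sectionsovergeneralcurve} is stated for a very general curve carrying a very general point, whereas here $b$ is fixed very generally in $S$ and $C$ then ranges over curves through $b$. I would handle this via the incidence variety $I=\{(C,b): b\in C\}$ with its second projection $\text{pr}_2\colon I\to S$. For each of the countably many proper closed ``bad'' loci $F_j\subset I$ coming from Lemma~\ref{sectionsovergeneralcurve}, the set of $b$ whose entire fiber $I_b$ lies in $F_j$ is contained in a proper closed subset of $S$ (otherwise $F_j$ would be all of $I$); so for very general $b$ each $F_j$ meets $I_b$ in a proper closed subset, and intersecting the countably many conditions shows that for very general $b$ and very general $C$ through $b$ the pair $(C,b)$ lies in the locus where Lemma~\ref{sectionsovergeneralcurve} holds. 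The same incidence analysis underlies the density of the swept-out locus used in the final paragraph.
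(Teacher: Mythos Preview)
Your argument is correct and follows the same template as the paper: parametrize the sections through $p$ by a scheme locally of finite type, assume a positive-dimensional irreducible piece $T$ exists, and derive a contradiction by restricting to a very general $u_0$-curve $C$ through $b$. The only difference is in how the contradiction is extracted at the end. The paper first fixes two points $t_0,t_1\in T$, observes that the locus $B\subset S$ on which $\sigma_{t_0}$ and $\sigma_{t_1}$ agree is a proper closed subset, and then picks a single very general $C$ through $b$ not contained in $B$; this makes the restricted family $\{\sigma_t|_C\}_{t\in T}$ genuinely positive-dimensional, and the bend-and-break argument of Lemmas~\ref{sectionsovercurve} and~\ref{sectionsovergeneralcurve} produces a rational curve in $X_b$ directly. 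You instead invoke Lemma~\ref{sectionsovergeneralcurve} as a black box to force the restriction $T\to\sections_b^p(X_C/C)$ to be constant for each very general $C$, and then let $C$ vary so that the equalizer of $\sigma_{t_0},\sigma_{t_1}$ contains a dense family of curves, hence all of $S$. Your route buys a clean citation of the previous lemma at the cost of the extra sweeping/equalizer step and the incidence-variety bookkeeping you flag; the paper's route avoids that bookkeeping by working with one curve, but has to re-run bend-and-break rather than quoting Lemma~\ref{sectionsovergeneralcurve}. Neither has a real advantage.
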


\begin{proof}
Let $\Delta$ be the scheme parameterizing sections of $\pi$ that map $b$ to $p$.  
Suppose that there exists an irreducible component of $\Delta$ that has positive dimension. 
Then there exists an integral curve $T$ in $\Delta$.  Since every irreducible component of $\Delta$ is quasi-projective, 
also the curve $T$ is quasi-projective.  By taking the normalization of $T$, we can assume further that $T$ is smooth.  
Now, take the evaluation map 
\[\rho_{ev}: T\times_k S\to X.\]
Let $t_0$ and $t_1$ be two closed points in $T$.  The images $\rho_{ev}(\{t_0\}\times S)$ and $\rho_{ev}(\{t_1\}\times S)$ 
are distincet closed subschemes in $X$, so their intersection $A$ maps, via $\pi$, to a proper closed subscheme $B$ of $S$.  
Note that $A$ contains $p$, and the point $b$ is in $B$.  

Take a very general, irreducible smooth $u_0$-curve $C$ in $S$ such that $C$ contains $b$ and $C$ is not contained in $B$.  
Denote by $C_0$ the complement $C\setminus (B\cap C)$.  
Let $c_0$ and $c_1$ be two closed points in $C_0$.  
Assume that $\rho_{ev}(\{t_0\}\times\{c_0\})$ and $\rho_{ev}(\{t_1\}\times\{c_1\})$ are same in $X$.  
Then the closed points $c_0$ and $c_1$ are in $B$, contradicting our choices of $c_0$ and $c_1$.  
So $\rho_{ev}(\{t_0\}\times C)$ and $\rho_{ev}(\{t_1\}\times C)$ are distinct curves in $X$.  
Thus, $\{\rho_{ev}(\{t\}\times C)\}_{t\in T}$ gives a positive dimensional family of curves in $X$ such that every curve in the family passes through $p$.  
By the same argument as in Lemma~\ref{sectionsovercurve} and Lemma~\ref{sectionsovergeneralcurve}, 
there is a rational curve on the fiber $X_b$, contradicting the choice of $b$.  
Therefore, $\Delta$ can not be positive dimensional, so it is a countable set of isolated points.
\end{proof}

\section{General Set Up for Restriction of Sections}\label{generalsetupforrestrictionofsections}

In this section, we give a proof of Theorem~\ref{generalizedjasonmaintheorem3},
which gives the machinary to form more examples of the theorem of restriction of sections.

Fix an uncountable algebraically closed field $k$ of characteristic zero.
Let $u_0: S\rightarrow \mathbb{P}^n_k$ be a finite and \'etale morphism onto an open dense 
subset of $\mathbb{P}^n_k$.  Denote by $W$ a smooth, projective compactification of $S$ (\cite{Santai}, Notation 3.18).

\subsection{Inputs of the theorem}\label{inputsofthetheorem}

The input is a finite morphism $f: X\to Y$, where $\rho_X: X\to S$ and $\rho_Y: Y\to S$ are
smooth, projective morphisms satisfying the following conditions:

\begin{enumerate}[label=(\roman*)]
\item{$Y$ has a normal pseudo-N\'eron model $\widetilde{Y}$ over $W$,}
\item{every geometric fiber $Y_{\overline{s}}$, $s\in S$, does not contain any rational curve, and}
\item{for a very general genus-$g_Y$, degree-$d_Y$, $u_0$-curve $C$ in $S$, the restriction map of sections
\[\sections(Y/S)\to \sections(Y_C/C)\] 
is a bijection.}
\end{enumerate}
As immediate consequences, we have that every geometric fiber $X_{\overline{s}}$, $s\in S$, does not
contain any rational curve, and $X$ has a normal pseudo-N\'eron model over $W$ (Theorem~\ref{neronhigh}).

\subsection{Parameter spaces}\label{variousparameterspaces}

\subsubsection{Motivation of the construction}\label{motivationoftheconstruction}

We first define the bad set for sections and curve-pairs.  Fix a point $b\in S$, 
and let $p\in Y$ and $\sigma$ be a section of $Y$ over $S$ mapping $b$ to $p$.  
Denote $\mathfrak{m}=(C_0, q, C_1)$ be a curve-pair in $S$ of genus-$(g_0+g_1)$, 
degree-$(d_0+d_1)$, such that they intersect at a closed point $q\in S$.  

\begin{definitionnotation}\label{sectionsdefinition}
Let $\sections_b^p(X/S)$ be the set of sections of $X$ over $S$ 
such that every section in the set maps $b\in S$ to $p\in Y$ via $f: X\to Y$. 
Similarly, for a curve $C$ in $S$ passing through $b$, we can define $\sections_b^p(X_C/C)$, 
also $\sections_b^p(Y/S)$ and $\sections_b^p(Y_C/C)$.
\end{definitionnotation}  

\mni
As in \cite{Santai}, Section 3.3.1, we consider the following two properties,
\begin{enumerate}[label=(\roman*)]
\item $\sections_b^p(Y/S)\rightarrow\sections_b^p((Y\times_S\mathfrak{m})/\mathfrak{m})$ is bijective;
\item $\sections_b^p((X\times_{f, Y,\sigma}S)/S)\rightarrow \sections_b^p((X\times_{f, Y, \sigma}S\times_S C_1)/C_1)$ is bijective,
\end{enumerate}
where the maps of the sets of sections are restrictions and the fiber product 
$X\times_{f, Y,\sigma}S$ comes from the section $\sigma$ from $S$ to $Y$ mapping $b$ to $p$.
Note that for $\sections_b^p((X\times_{f, Y,\sigma}S)/S)$ we are talking about 
the top row of the following Cartesian diagram.

\[\xymatrix{
X\times_{f, Y,\sigma}S \ar[r]\ar[d]  &  S\ar[d]^{\sigma}   \\
X\ar[r]_f  &  Y  
}\]
Intuitively, the bad set will be 
\[\{(p,\sigma),(C_0, q, C_1)|\text{either (i) is false or (ii) is false}\}.\]
Fix a closed point $b\in S$.  
The main reason we consider this parameter space is the matching condition lemma (\cite{Santai}, Lemma 3.51),
which says that away from the bad sets the theorem of restriction of sections
holds if we fix the points $b$ and $p$.

\begin{lemma}(\cite{Santai}, Lemma 3.51)\label{matching}
(1). Fix a point $b\in S$, a point $p$ in $Y_b$, and a point $p'\in f^{-1}(p)$.  
Let $\sigma$ be a section in $\sections_b^p(Y/S)$.  
Then, for a genus-$g_Y$, degree-$d_Y$ curve $C$
and a genus-$g$, degree-$d$ curve $m$ containing $b$ with $d\ge 2$ 
such that $C$ and $m$ intersect at a very general point, 
every section in $\sections_b^{p'}(X_{C\cup m}/C\cup m)$ that maps to $\sigma|_{C\cup m}$ is the restriction of a unique section in 
$\sections_b^{p'}(X/S)$ that maps to $\sigma$ if $C\cup m$ is good for $\sigma$ (see Definition~\ref{goodcurves}).

\mni
(2). Conversely, if $p$ is a bad point for $C\cup m$, then for $C$ and $m$ intersecting at a very general point, there exists a section in 
$\sections_b^{p'}(X_{C\cup m}/C\cup m)$ that cannot be extended uniquely.

\mni
(3). Let $C$ be a genus-$g$, degree-$d$, irreducible smooth curve marked by $b$ with $d\ge 2$.  
Then, every section in $\sections_b^{p'}(X_{C}/C)$ that maps to $\sigma|_{C}$ is the restriction of 
a unique section in $\sections_b^{p'}(X/S)$ that maps to $\sigma$ if $C$ is good for $\sigma$ (see Definition~\ref{goodirreduciblecurves}).
\end{lemma}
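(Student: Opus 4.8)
The plan is to reduce the whole statement to a question about sections of a single finite $S$-scheme, namely the pullback of $X$ along $\sigma$, and then to run an \'etale-matching argument at the node of the curve-pair. Set $X' := X\times_{f,Y,\sigma}S$; since $f$ is finite, $X'\to S$ is finite, its fibre over $b$ is $f^{-1}(\sigma(b))=f^{-1}(p)$, and this contains $p'$. By the universal property of the fibre product, a section $s$ of $X$ over a locally closed $B\subseteq S$ with $f\circ s=\sigma|_B$ is exactly a section of $X'$ over $B$; this identifies the sections in $\sections_b^{p'}(X_{C\cup m}/C\cup m)$ lying over $\sigma|_{C\cup m}$ with the sections of $X'$ over $C\cup m$ through $p'$, and likewise with $S$, $C$ or $m$ in place of $C\cup m$. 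Because $X$ and $Y$ are smooth over $S$ and $\chara k=0$, the finite map $f$ is generically \'etale, so its branch locus $R\subset Y$ is a proper closed subset; hence $X'\to S$ is \'etale away from $\sigma^{-1}(R)$, a proper closed subset of $S$. This, together with the separatedness of $X'\to S$ inherited from $X\to S$, is the only geometric input I will need beyond the two properties packaged into ``good for $\sigma$''.

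For part (1), write $q=C\cap m$ for the node and let $\gamma\in\sections_b^{p'}(X_{C\cup m}/C\cup m)$ map to $\sigma|_{C\cup m}$, i.e.\ a section of $X'$ over $C\cup m$ through $p'$. First I would restrict to $m=C_1$: since $C\cup m$ is good for $\sigma$, property (ii) says $\sections_b^{p'}(X'/S)\to\sections_b^{p'}(X'_m/m)$ is a bijection, so $\gamma|_m$ comes from a unique global section $\widetilde\gamma\in\sections_b^{p'}(X'/S)$. Uniqueness of the extension of $\gamma$ itself is then immediate, since any global section restricting to $\gamma$ on $C\cup m$ restricts in particular to $\gamma|_m$ on $m$, whence injectivity in (ii) forces it to equal $\widetilde\gamma$. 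The substance is therefore the matching on the component $C$: I must check $\widetilde\gamma|_C=\gamma|_C$. Both are sections of $X'_C\to C$ lying over $\sigma|_C$, and they agree at $q$, because $q\in m$ gives $\widetilde\gamma(q)=\widetilde\gamma|_m(q)=\gamma|_m(q)=\gamma(q)=\gamma|_C(q)$. Choosing the intersection point $q$ very general places it off $\sigma^{-1}(R)$ (and keeps $C\not\subseteq\sigma^{-1}(R)$), so $X'_C\to C$ is \'etale near $q$; as the locus on which two sections of an \'etale separated morphism coincide is open and closed, it exhausts the connected \'etale locus $C\setminus(C\cap\sigma^{-1}(R))$, and by separatedness $\widetilde\gamma|_C=\gamma|_C$ on all of $C$. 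Thus $\widetilde\gamma|_{C\cup m}=\gamma$, giving existence.

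Part (3) is the same reduction with no node to match: for an irreducible smooth $C$ that is good for $\sigma$, the relevant goodness condition (Definition~\ref{goodirreduciblecurves}) is precisely that $\sections_b^{p'}(X'/S)\to\sections_b^{p'}(X'_C/C)$ be a bijection, so a section of $X'$ over $C$ through $p'$ extends to a unique global section directly. It is worth recording that the reason such restriction maps can be bijective at all, i.e.\ that curve-sections extend over $S$, is the weak extension property of the pseudo-N\'eron model $\widetilde Y$, hence of $\widetilde X$ by Theorem~\ref{neronhigh}, together with the absence of rational curves in the fibres, via Proposition~\ref{extensionnorationalcurves} and Lemmas~\ref{shrink} and~\ref{sectionsovergeneralcurve}; but for the present lemma those facts enter only through the hypothesis that $C$, resp.\ $C\cup m$, is good.

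The real work is the converse, part (2), which I expect to be the main obstacle, since there $p$ is bad for $C\cup m$, meaning (i) or (ii) fails, and I must manufacture an actual section over $C\cup m$ with no unique extension rather than merely record the abstract failure of a bijection. If injectivity in (ii) fails I would take $\tau_1\neq\tau_2\in\sections_b^{p'}(X'/S)$ with $\tau_1|_m=\tau_2|_m$; then $\tau_1(q)=\tau_2(q)$, so the mixed datum $(\tau_2|_C,\tau_1|_m)$ glues to a section over $C\cup m$ through $p'$, and comparing its possible extensions against $\tau_1$ and $\tau_2$ exhibits either a missing or a non-unique extension. If surjectivity in (ii) fails there is a section of $X'$ over $m$ through $p'$ not extending over $S$; gluing it at $q$ to a section of $X'$ over $C$ through the same fibre point yields a section over $C\cup m$ that cannot extend. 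If instead (i) fails I would run the identical dichotomy one level down, on the $Y$-side map $\sections_b^{p}(Y/S)\to\sections_b^{p}((Y\times_S\mathfrak m)/\mathfrak m)$, produce a $Y$-section over $C\cup m$ that does not match $\sigma$ uniquely, and lift it through a chosen preimage in $f^{-1}(p)$, its non-uniqueness upstairs following because $f$ carries any extension to an extension of its image. The delicate points throughout are arranging that the node $q$ is generic enough for all gluings to be transverse and to land off the branch locus, and that the manufactured sections genuinely pass through $p'$; this is exactly where the hypothesis of a \emph{very general} intersection point is spent.
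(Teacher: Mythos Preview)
The paper gives no argument here: its proof is the single line ``The proof is the same as \cite{Santai}, Lemma 3.51,'' so there is nothing substantive to compare against beyond checking that your approach is the natural reduction-to-a-finite-cover argument one expects, which it is.

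That said, there is a concrete labelling error that makes your part (1) misfire as written. In Definition~\ref{goodcurves} the curve-pair is $\mathfrak m=(b,[C_0],q,[C_1])$ with $b\in C_0$, and by the conventions of Notation~\ref{md2} it is $C_0$ that has genus $g$ and degree $d$ while $C_1$ is the $(g_Y,d_Y)$-curve. In the lemma's own notation this means $C_0=m$ and $C_1=C$, \emph{not} $C_1=m$ as you write. Consequently property (ii) is a bijection for restriction to $C$, the $Y$-curve, not to $m$; your sentence ``property (ii) says $\sections_b^{p'}(X'/S)\to\sections_b^{p'}(X'_m/m)$ is a bijection'' invokes a hypothesis you do not have. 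The fix is to swap the roles: restrict $\gamma$ to $C$, use (ii) to produce the unique global $\widetilde\gamma$, and then run your \'etale-matching argument on the component $m$ (which contains $b$) at the node $q$. With that swap your argument for (1) is correct, and as a bonus the matching on $m$ is what actually certifies $\widetilde\gamma(b)=p'$, since $b\in m$ but $b\notin C$; note that the decoration $\sections_b^{p}$ in (ii) is vacuous on the $C_1$-side for exactly this reason. Part (3) is unaffected.

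For part (2) your outline is the right shape but the gluing steps are not yet justified. When surjectivity in (ii) fails you assert the existence of ``a section of $X'$ over $C$ through the same fibre point'' to glue against; but $X'_m\to m$ (or $X'_C\to C$, after the relabelling) is merely finite and generically \'etale over a possibly irrational curve, so a section through a prescribed point of the fibre over $q$ need not exist unless you control the monodromy. The same issue recurs when (i) fails and you propose to ``lift through a chosen preimage in $f^{-1}(p)$'': a section of $X\times_{Y,\tau}(C\cup m)\to C\cup m$ through $p'$ exists only if the connected component of that finite cover containing $p'$ is a sheet, which again is not automatic. You will need either to use the very-generality of the node more seriously (e.g.\ arranging the curve to miss the branch locus of $f$ and then arguing with components of the \'etale cover), or to argue indirectly that failure of (i) or (ii) already forces the restriction map on $X$-sections to fail bijectivity without manufacturing an explicit bad section.
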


\begin{proof}
The proof is the same as \cite{Santai}, Lemma 3.51.
\end{proof}

We see that the parameter space of bad sets consists of two factors:
the space of pairs $(p, \sigma)$ for a point $p\in Y$ and a section of $Y$ over $S$
that maps $b$ to $p$; the space of marked curve-pairs $(C_0, q, C_1)$ with $b\in C_0$; 
similarly, the space of marked smooth irreducible curves for (3) in Lemma~\ref{matching}.

\subsubsection{The space of curves and curve-pairs}\label{thespaceofcurvepairsandcurves}

\begin{definitionnotation}\label{md2}
Let $\mathcal{M}_{d+d_Y}^{g+g_Y}(\mathbb{P}^n_k, \tau)$ be the stack parameterizing 
pointed curve-pairs of genus-$(g+g_Y)$, degree-$(d+d_Y)$ in $\mathbb{P}^n_k$.  
These are 4-tuples for such a pair $(s, [C_0], t, [C_1])$ consisting of a point $s$ of $\mathbb{P}^n_k$, 
a smooth curve $C_0$ of degree $d$, genus $g$ that contains $s$, a point $t$ on $C_0$, 
and a genus $g_Y$ and degree $d_Y$ that contains $t$.  
\end{definitionnotation}

The marked point $s$ defines an evaluation morphism 
\[\rho_{ev}: \mathcal{M}_{d+d_Y}^{g+g_Y}(\mathbb{P}^n_k, \tau) \rightarrow \mathbb{P}^n_k, (s, [C_0], t, [C_1])\mapsto s.\]

\begin{definitionnotation}\label{md0b}
Denote by $\mathcal{M}_{d+d_Y}^{g+g_Y}(\mathbb{P}^n_k, \tau, b)$ the fiber $\rho_{ev}^{-1}(b)$,
i.e., the stack of pointed curve-pairs in $\mathbb{P}^n_k$ such that the marked point on $C_0$ is $b$. 
\end{definitionnotation}

\begin{definitionnotation}\label{xxx}
Delete from $\mathcal{M}_{d+d_Y}^{g+g_Y}(\mathbb{P}^n_k, \tau, b)$  the closed subset 
parameterizing curve-pairs in which $C_0$ and $C_1$ are tangent to each other.
Then, let $\mathcal{X}$ be the subspace of $\mathcal{M}_{d+2}(\mathbb{P}_k^n, \tau, b)$ after this deletion.  
\end{definitionnotation}

\begin{definitionnotation}\label{md0bb}
Denote by $\mathcal{M}_d^g(\mathbb{P}^n_k, b)$ the stack of genus-$g$, 
degree-$d$ curves with a marked point $b$ in $\mathbb{P}^n_k$.
\end{definitionnotation}

\begin{definitionnotation}\label{mdvb}
Let $\mathcal{H}=\mathcal{M}_{d+d_Y}^{g+g_Y}(\mathbb{P}^n_k, \varepsilon, b)$ be the space parameterizing
genus $g+g_Y$, degree $d+d_Y$ curve with a marked point $b$, 
which can be reducible but at worst a curve-pair of genus-$(g+g_Y)$, degree-$(d+d_Y)$.  
And, if the curve is a curve-pair, the marked point $b$ is on the irreducible component 
that is not of genus-$(g+g_Y)$, degree-$(d+d_Y)$. 
\end{definitionnotation}

Recall that $X$ is a scheme over $S$ such that it admits a normal pseudo-N\'eron model $\widetilde{X}$ 
over an open dense $\widetilde{S}$, containing $S$, of codimension at least two in $W$.

\begin{definitionnotation}\label{xxx0}
As subspaces of $\mathcal{H}$ and $\mathcal{X}$,
denote the open locus of genus-$(g+g_Y)$, degree-$(d+d_Y)$ curves or curve-pairs 
contained in $\widetilde{S}$ by $\mathcal{H}_0$ and $\mathcal{X}_0$ respectively.  
Similarly, as subspaces of $\mathcal{H}$ and $\mathcal{X}$, let the space of genus-$(g+g_Y)$, degree-$(d+d_Y)$ 
curves or curve-pairs in $S$ be $\mathcal{H}_1$ and $\mathcal{X}_1$ respectively.  
\end{definitionnotation}

\begin{remark}
Taking the maximal open subschemes of the spaces $\mathcal{H}$, $\mathcal{X}$, 
$\mathcal{H}_0$, $\mathcal{X}_0$, $\mathcal{H}_1$ and $\mathcal{X}_1$, we can 
assume that they are all schemes over $k$ since we consider only very general curves and curve-pairs.  
Moreover, we may assume further that the scheme $\mathcal{H}$ is an integral
and smooth $k$-scheme and the locus $\mathcal{X}$ is an integral, smooth
Cartier divisor in $\mathcal{H}$.
\end{remark}

\subsubsection{The space of sections}\label{thespaceofsections}

Take the projective compactification $W$ of $S$ as in \cite{Santai}, Notation 3.18.
Let $\widetilde{X}\to \widetilde{S}$ be a pseudo-N\'eron model of $X\to S$ where $\widetilde{S}$ is an open dense subset 
of $W$ that contains all the codimension one points of $W$.
Let $\overline{X}\to W$ be a projective morphism whose restriction over $\widetilde{S}$ equals $\widetilde{X}$.

Because there is no rational curve on every geometric fiber of $X\to S$, 
by using Chow variety (see the construction in \cite{Santai}, Notation 3.25), we get a space $\mathfrak{H}$ parameterizing 
pairs $(p, \sigma)$ where $p\in Y_b$, and $\sigma$ is a section of $Y\to S$
that maps $b$ to $p$.  Note that every irreducible component of $\mathfrak{H}$ 
is a quasi-projective $k$-variety, but $\mathfrak{H}$ can have infinitely many irreducible components.
Also, there is a morphism $\mathfrak{H}\to Y$
mapping $(p, \sigma)$ to $p$.  

For a fixed closed point $p\in Y$, 
we denote the fiber of $\mathfrak{H}\to Y$ by $\mathfrak{H}_p$.  
Then, the space $\mathfrak{H}_p$ parameterizes sections of $Y$ over $S$
that maps $b$ to $p$.  By Lemma~\ref{sectionsoverbase},
$\mathfrak{H}_p$ is a countable union of isolated points.
However, it is possible that the local ring over an isolated point of $\mathfrak{H}_p$
is not the field $k$, but an Artinian local ring, so non-reduced.

\begin{definitionnotation}\label{remarkonmodulispaceofsections}
Let $\mathfrak{S}$ be an irreducible component of $\mathfrak{H}$.
Let $\mathbb{P}^n_{\mathfrak{S}}$, resp. $S_{\mathfrak{S}}$, resp. $Y_{\mathfrak{S}}$, resp. $X_{\mathfrak{S}}$, be the base change $\mathfrak{S}\times_k\mathbb{P}^n_k$, resp. $\mathfrak{S}\times_k S$, resp. $\mathfrak{S}\times_k Y$, resp. $\mathfrak{S}\times_k X$.  Denote by $\projection_1$ and $\projection_2$ the projections from $S_{\mathfrak{S}}$ to $\mathfrak{S}$ and $S$ respectively.    
\end{definitionnotation}

Similarly, we can define the space of sections over curves and curve-pairs.

\begin{definitionnotation}\label{universalsectionlifting}
Denote by $H$ the scheme which is universal for the problem of lifting curves, 
which are parameterized by $\mathcal{H}_1$ (see Notation~\ref{xxx0}), from $S$ to $X$, \emph{and mapping the point $b$ to $p$}.  
For the rigorous construction of $H$, see \cite{Santai}, Remark 3.45.
Then, $H$ parameterizes the pairs $([C], \gamma)$ for $[C]\in \mathcal{H}_1$ and $\gamma$
is a section of $X$ over $C$ that maps $b$ to $p$ via $f: X\to Y$.
Note that for every $[C]$, the fiber of $H\to \mathcal{H}_1$ over $[C]$ is a 
countable union of isolated points (Lemma~\ref{sectionsovergeneralcurve}).
But we do not exclude the possiblity that the fibers of $H\to \mathcal{H}_1$ are not reduced.
\end{definitionnotation}

\subsubsection{Specialization of sections over curves}

After defining the parameter spaces of curves and sections in Section~\ref{thespaceofcurvepairsandcurves} and Section~\ref{thespaceofsections}, 
we can state the main application of pseudo-N\'eron models in the proof of
theorem of restriction of sections.

\begin{definitionnotation}\label{ch01}
Keep the notations in Section~\ref{thespaceofcurvepairsandcurves}.
Let $\mathcal{C}_{\mathcal{H}} \subset\mathcal{H}\times_k W$ be the universal family 
of genus-$(g+g_Y)$, degree-$(d+d_Y)$ curves over $\mathcal{H}$.
We denote the open subset of universal family of genus-$(g+g_Y)$, degree-$(d+d_Y)$ 
curves in $\widetilde{S}$ (resp. $S$) by $\mathcal{C}_{\mathcal{H}_0}$ (resp. $\mathcal{C}_{\mathcal{H}_1}$).  
\end{definitionnotation}

\mni
Let $\Phi$ be the composition of the structure morphism of $H$ over $\mathcal{H}_1$ 
and the open immersion $\mathcal{H}_1\rightarrow \mathcal{H}_0$.
Let $\varrho: H\times_{\mathcal{H}_0}\mathcal{C}_{\mathcal{H}_0}\rightarrow \widetilde{X}$
be the universal section
which factors through the inclusion $X\to \widetilde{X}$.  
Then, we get the following diagram.   

\[\xymatrix{
  &   &  \widetilde{X}\ar[d] \\
H\times_{\mathcal{H}_0}\mathcal{C}_{\mathcal{H}_0}\ar[d]\ar[r]\ar[rru]^{\varrho} & \mathcal{C}_{\mathcal{H}_0}\ar[r]\ar[d] & \widetilde{S}   \\
H \ar[r]^{\Phi} & \mathcal{H}_0   &
}\]
Note that $H$ may have infinitely many irreducible components.  
And every irreducible component of $H$ is quasi-projective and quasi-finite over $\mathcal{H}_0$ 
(see Lemma~\ref{sectionsovergeneralcurve} for the quasi-finiteness).  

The key application of pseudo-N\'eron models in the problem of restriction of sections 
is that pseudo-N\'eron model of $X$ implies that $\varrho$ is actually 
defined over an open dense subset of $\mathcal{X}$ (\cite{Santai}, Theorem 3.47).
In other words, sections over irreducible, smooth, genus-$(g+g_Y)$, degree-$(d+d_Y)$
curves specialize to sections over curve-pairs of genus-$(g+g_Y)$ and degree-$(d+d_Y)$.
Thus, if the theorem of restriction of sections holds for curve-pairs, 
we can deform the curve-pairs to irreducible curves such that 
the theorem of restriction of sections also holds for irreducible smooth curves.
And we can prove that this is definitely the case.

\begin{proposition}(\cite{Santai}, Corollary 3.50)\label{inductionI}
Fix $b\in S$ and $p\in Y_b$ closed points.  Suppose that for a very general genus-$(g_0+g_1)$, degree-$(d_0+d_1)$ curve-pair $C_0\cup C_1$ every section in $\sections_b^p(X_{C_0\cup C_1}/C_0\cup C_1)$ is the restriction of a unique section in $\sections_b^p(X/S)$.  Then,  for a very general genus-$(g_0+g_1)$, degree-$(d_0+d_1)$ irreducible smooth curve containing $b$, every section over this curve mapping $b$ to $p$ is the restriction of a unique section of $X$ over $S$.
\end{proposition}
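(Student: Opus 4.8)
The plan is to deduce the smooth-curve case from the curve-pair case by means of the specialization of sections carried by the universal section $\varrho$. The crucial structural input is that, since $X$ admits a normal pseudo-N\'eron model $\widetilde{X}$ over $\widetilde{S}$, the section $\varrho$, a priori defined only over the family $\mathcal{C}_{\mathcal{H}_0}$ of smooth curves parameterized by $H$, extends over an open dense subset of the curve-pair divisor $\mathcal{X}\subset\mathcal{H}$ (\cite{Santai}, Theorem 3.47). Fixing a very general smooth irreducible curve $[C]\in\mathcal{H}_1$ through $b$ and a very general arc in $\mathcal{H}$ degenerating $C$ to a curve-pair $[C_0\cup C_1]\in\mathcal{X}_1$, this extension produces a specialization map $\mathrm{sp}\colon\sections_b^p(X_C/C)\to\sections_b^p(X_{C_0\cup C_1}/C_0\cup C_1)$, which is single-valued because $\widetilde{X}$ is separated and the relevant compactified curve is normal, exactly as in the proof of Lemma~\ref{sectionsovergeneralcurve}. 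By construction $\mathrm{sp}$ is compatible with restriction of global sections: for every $\sigma\in\sections_b^p(X/S)$ one has $\mathrm{sp}(\sigma|_C)=\sigma|_{C_0\cup C_1}$, since restricting a fixed global section commutes with degenerating the curve.

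By Lemma~\ref{sectionsoverbase} the set $\sections_b^p(X/S)$ is countable; write it as $\{\sigma_i\}$. The hypothesis states that restriction to the curve-pair identifies $\sections_b^p(X/S)$ with all of $\sections_b^p(X_{C_0\cup C_1}/C_0\cup C_1)$, so the latter equals $\{\sigma_i|_{C_0\cup C_1}\}$. Since $\mathrm{sp}(\sigma_i|_C)=\sigma_i|_{C_0\cup C_1}$, the restrictions $\{\sigma_i|_C\}\subseteq\sections_b^p(X_C/C)$ already surject onto the target of $\mathrm{sp}$. Consequently, \emph{if} $\mathrm{sp}$ is injective, then every section over $C$ is one of the $\sigma_i|_C$; that is, every element of $\sections_b^p(X_C/C)$ is the restriction of a global section, which is the required surjectivity of the restriction map.

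For the uniqueness of the extension I would again use the discreteness of $\{\sigma_i\}$. Distinct global sections $\sigma_i\neq\sigma_j$ agree only on a proper closed subset $Z_{ij}\subsetneq S$, so the locus of curves $[C]\in\mathcal{H}_1$ with $C\subseteq Z_{ij}$ is proper closed in $\mathcal{H}_1$; as $k$ is uncountable, a very general $[C]$ avoids the countable union of these loci, whence $i\mapsto\sigma_i|_C$ is injective. Together with Lemma~\ref{shrink} this shows that $\sections_b^p(X/S)\to\sections_b^p(X_C/C)$ is injective, so the global section extending a given $\gamma$ is unique. Combining this with the previous paragraph, for a very general smooth irreducible $C$ every section over $C$ is the restriction of a unique global section, which is the assertion.

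The main obstacle is the injectivity of the specialization map $\mathrm{sp}$: separatedness of $\widetilde{X}$ guarantees only that each section over $C$ has at most one limit over $C_0\cup C_1$, not that two distinct sections over $C$ acquire distinct limits. This is precisely a statement about the quasi-finite morphism $H\to\mathcal{H}_0$ along the divisor $\mathcal{X}$: its fibers are countable unions of isolated points by Lemma~\ref{sectionsovergeneralcurve} but may be non-reduced or ramified, and the ramification one must control is concentrated exactly where a smooth curve degenerates to a curve-pair. I expect to resolve this by invoking \cite{Santai}, Theorem 3.47, which not only extends $\varrho$ across $\mathcal{X}$ but does so in a way making each (integral) component of $H$ unramified over a dense open subset of $\mathcal{X}$; here the characteristic-zero hypothesis is essential, since a dominant generically finite morphism of integral varieties is then generically \'etale, so that a section of $H$ along the degeneration is determined by its limit. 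This forces $\mathrm{sp}$ to be injective for very general $C$ and closes the argument.
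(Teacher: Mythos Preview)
Your approach matches the paper's: both reduce to \cite{Santai}, Corollary~3.50, via the extension of $\varrho$ across the curve-pair divisor (\cite{Santai}, Theorem~3.47) together with the discreteness of pointed sections (Lemmas~\ref{sectionsovergeneralcurve} and~\ref{sectionsoverbase}), and the paper's own proof simply records these ingredients and defers to \cite{Santai}. One small caution on your last paragraph: generic \'etaleness of a dominant component $H_i\to\mathcal{H}_0$ in characteristic zero yields \'etaleness only over a dense open of $\mathcal{H}_0$, which need not meet the codimension-one locus $\mathcal{X}$; the unramifiedness of $H$ along a dense open of $\mathcal{X}$ that you correctly identify as the crux is furnished specifically by the pseudo-N\'eron extension in \cite{Santai}, Theorem~3.47, rather than by a general characteristic-zero principle.
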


\begin{proof}
The main ingredients of the proof is Theorem 3.47 in  \cite{Santai} and the discreteness of the paremeter spaces of sections 
once we fix the points $b\in S$ and $p\in Y$ (Lemma~\ref{sectionsovergeneralcurve} and Lemma~\ref{sectionsoverbase}).
The the proof follows from the same way as \cite{Santai}, Corollary 3.50. 
\end{proof}

\subsubsection{The space of bad sets}

\begin{definitionnotation}\label{mds}
Denote by $\mathcal{M}_{d+d_Y}^{g+g_Y}({\mathbb{P}^n_{\mathfrak{S}}}{/\mathfrak{S}}, \tau)$ 
the space parameterizing schemes over $\mathfrak{S}$ such that every 
geometric fiber is a pointed curve-pair of genus-$(g+g_Y)$, degree-$(d+d_Y)$ as in Notation~\ref{md2}.
\end{definitionnotation}

There is also an evaluation map 
\[\rho_{ev}^{\mathfrak{S}}: \mathcal{M}_{d+d_Y}^{g+g_Y}({\mathbb{P}^n_{\mathfrak{S}}}{/\mathfrak{S}}, \tau)\rightarrow \mathbb{P}^n_{\mathfrak{S}}.\]
Denote by $\lambda$ the section of $S_{\mathfrak{S}}\rightarrow \mathfrak{S}$
marking every closed fiber by the point $b\in S$.
Let $u_0^{\mathfrak{S}}$ be the base change of the morphism $u_0: S\to\mathbb{P}_k^n$ by $\mathfrak{S}$.
Then, $\mu(b)=u_0^{\mathfrak{S}}\circ \lambda$ is a section of $\mathbb{P}^n_{\mathfrak{S}}$ over $\mathfrak{S}$. 

\begin{definitionnotation}\label{mdbb}
Denote by $\mathcal{M}_{d+d_Y}^{g+g_Y}({\mathbb{P}^n_{\mathfrak{S}}}{/\mathfrak{S}}, \tau)_{\mu(b)}$ the fiber product of $\rho_{ev}^{\mathfrak{S}}$ and the section $\mu(b): \mathfrak{S}\rightarrow \mathbb{P}^n_{\mathfrak{S}}$ over $\mathbb{P}^n_{\mathfrak{S}}$.  
\end{definitionnotation}
Then, $\mathcal{M}_{d+d_Y}^{g+g_Y}({\mathbb{P}^n_{\mathfrak{S}}}{/\mathfrak{S}}, \tau)_{\mu(b)}$ 
parameterizes the pairs of sections and curve-pairs $\{(p, \sigma), (b, [C_0], t, [C_1])\}$ 
where $\sigma$ is a section of $X$ over $S$ mapping $b$ to $p$ 
and the marked point on the curve $[C_0]$ is the fixed point $b$ (cf. Notation~\ref{md2}).

\begin{definitionnotation}\label{md01}
Denote by $\mathcal{M}_d^g({\mathbb{P}^n_{\mathfrak{S}}}{/\mathfrak{S}}, 1)$ the 
space parameterizing families over $\mathfrak{S}$
such that every geometric fiber is a genus-$g$, degree-$d$ curve with a marked closed point in $\mathbb{P}^n_k$, $(s, [C_0])$.
\end{definitionnotation}
Take the morphism forgetting the genus-$g_Y$, degree-$d_Y$ curve $[C_1]$
\[\mathcal{M}_{d+d_Y}^{g+g_Y}({\mathbb{P}^n_{\mathfrak{S}}}{/\mathfrak{S}}, \tau)\rightarrow \mathcal{M}_d^g({\mathbb{P}^n_{\mathfrak{S}}}{/\mathfrak{S}}, 1), (s, [C_0], t, [C_1])\mapsto (s, [C_0]).\]
Consider the diagram where the right square is Cartesian and $\varphi$ is just the composition
of the two vertical morphisms.

\[\xymatrix{
\mathcal{M}_{d+d_Y}^{g+g_Y}({\mathbb{P}^n_{\mathfrak{S}}}{/\mathfrak{S}}, \tau)_{\mu(b)}\ar[r]\ar[d]\ar@/_5pc/[dd]_{\varphi}   &  \mathfrak{S}\ar[d]^{\mu(b)}   \\
 \mathcal{M}_{d+d_Y}^{g+g_Y}({\mathbb{P}^n_{\mathfrak{S}}}{/\mathfrak{S}}, \tau)\ar[r] _{\,\,\,\,\,\,\,\,\,\,\,\,\,\,\,\,\,\,\,\,\rho_{ev}^{\mathfrak{S}}}\ar[d]  &  \mathbb{P}^n_{\mathfrak{S}}  \\
\mathcal{M}_d^g({\mathbb{P}^n_{\mathfrak{S}}}{/\mathfrak{S}}, 1)  &  
}\]
Take a $k$-point $\alpha$ of $\mathfrak{S}$ which is a pair $(p, \sigma)$.  
The fiber of the morphism $\varphi$ over $\alpha$ gives a $k$-morphism
\[\varphi_{\alpha}: \mathcal{M}_{d+d_Y}^{g+g_Y}(\mathbb{P}^n_k, \tau, b)\rightarrow \mathcal{M}_d^g(\mathbb{P}^n_k, b)\]
which is the forgetful morphism.  
For every $k$-point $\beta$ of $\mathcal{M}_d^g(\mathbb{P}^n_k, b)$ 
corresponding to a genus-$g$, degree-$d$ curve $C_0$,
the fiber of $\varphi_{\alpha}$ is a Zariski open dense subset $U_{\beta}$ of the variety parameterizing genus-$g_Y$, degree-$d_Y$ curves in $\mathbb{P}^n_k$ that intersect the curve $C_0$.
We claim that there is a maximal open dense subset $V_{\beta}$ of $U_{\beta}$ 
such that the restrictions of sections 
\[\sections((X\times_{f, Y,\sigma}S)/S)\rightarrow \sections((X\times_{f, Y, \sigma}S\times_S C_1)/C_1)\]
on the genus-$g_Y$, degree-$d_Y$ curves $C_1$ are bijective.
This follows from a variation of Bertini's theorem since $f: X\to Y$ is finite (\cite{Santai}, Theorem 3.17).
We summarize all the objects in the following diagram.
\[\xymatrix{
V_{\beta}\ar@{^(->}[r]^{\begin{subarray}{c} \text{open} \\ \text{dense} \end{subarray}}\ar[rd] & U_{\beta}\ar[r]\ar[d]& \mathcal{M}_{d+d_Y}^{g+g_Y}(\mathbb{P}^n_k, \tau, b)\ar[r] \ar[d]_{\varphi_{\alpha}} & \mathcal{M}_{d+d_Y}^{g+g_Y}({\mathbb{P}^n_{\mathfrak{S}}}{/\mathfrak{S}}, \tau)_{\mu(b)}\ar[d]^{\varphi}  \\
& \beta \ar[r]& \mathcal{M}_d^g(\mathbb{P}^n_k, b)\ar[r]\ar[d] & \mathcal{M}_d^g({\mathbb{P}^n_{\mathfrak{S}}}{/\mathfrak{S}}, 1)\ar[d]  \\
& & \alpha \ar[r]  &  \mathfrak{S}
}\]
Let $\mathcal{V}_{\alpha}$ be the union of $V_{\beta}$ in $\mathcal{M}_{d+d_Y}^{g+g_Y}(\mathbb{P}^n_k, \tau, b)$.  
This is an open dense subset of $\mathcal{M}_{d+d_Y}^{g+g_Y}(\mathbb{P}^n_k, \tau, b)$.  
Take the union $\mathcal{U}$ of $\mathcal{V}_{\alpha}$ in 
$\mathcal{M}_{d+d_Y}^{g+g_Y}({\mathbb{P}^n_{\mathfrak{S}}}{/\mathfrak{S}}, \tau)_{\mu(b)}$, 
which is also open dense in $\mathcal{M}_{d+d_Y}^{g+g_Y}({\mathbb{P}^n_{\mathfrak{S}}}{/\mathfrak{S}}, \tau)_{\mu(b)}$.   
\begin{definitionnotation}\label{www}
Denote by $\mathcal{W}$ the open dense subset of $\mathcal{M}_{d+d_Y}^{g+g_Y}({\mathbb{P}^n_{\mathfrak{S}}}{/\mathfrak{S}}, \tau)$ parameterizing the genus-$(g+g_Y)$, degree-$(d+d_Y)$ curve-pairs $\mathfrak{m}$ such that the restriction of sections
\[\sections(Y/S)\rightarrow\sections((Y\times_S\mathfrak{m})/\mathfrak{m})\]
is bijective.  
For $g$ a multiple of $g_Y$ and $d$ a multiple of $d_Y$, such $\mathcal{W}$ exists by
Proposition~\ref{inductionI} and the argument of \cite{Santai}, Lemma 3.52.
\end{definitionnotation}

We define the bad set $\mathcal{D}_b$ as the complement of  
$\mathcal{U}\cap (\mathcal{W}\times_{\mu(b)}\mathfrak{S})$ in 
$\mathcal{M}_{d+d_Y}^{g+g_Y}({\mathbb{P}^n_{\mathfrak{S}}}{/\mathfrak{S}}, \tau)_{\mu(b)}$.  
Moreover, by construction, $\mathcal{D}_b$ parameterizes the pointed pairs 
$\{(p, \sigma), (b, [C_0], q, [C_1])\}$ such that either (i) is false or (ii) is false (cf. Section~\ref{motivationoftheconstruction}).  
There are two natural projections from $\mathcal{D}_b$ to $\mathfrak{S}$ and $\mathcal{M}_{d+d_Y}^{g+g_Y}(\mathbb{P}^n_k, \tau, b)$, i.e.,
\[\phi_1: \mathcal{D}_b\to \mathfrak{S},\,\,\,\{(p, \sigma), (b, [C_0], q, [C_1])\}\mapsto (p, \sigma),\]
\[\phi_2: \mathcal{D}_b\to \mathcal{M}_{d+d_Y}^{g+g_Y}(\mathbb{P}^n_k, \tau, b), \,\,\,\{(p, \sigma), (b, [C_0], q, [C_1])\}\mapsto (b, [C_0], q, [C_1]).\]
By projecting once more from $\mathfrak{S}$ to $Y$, we get a morphism
\[\phi_3: \mathcal{D}_b\to Y,\,\,\,\{(p, \sigma), (b, [C_0], q, [C_1])\}\mapsto  p.\]
Denote the fiber of $\phi_3$ over $p$ by $\mathcal{D}_b^p$.

\begin{definition}\label{badsets}
The set $\mathcal{D}_b$ constructed above is called \emph{the bad set of sections and curve-pairs marked by $b$}.  
For $\{(p, \sigma), (b, [C_0], q, [C_1])\}$ in $\mathcal{D}_b^p$, $p$ is called a \emph{bad point} for the curve-pair $(b, [C_0], q, [C_1])$, 
and $\sigma$ is called a \emph{bad section} for $(b, [C_0], q, [C_1])$.
\end{definition}

\begin{definition}\label{goodcurves}
Fix $b\in S$ and $p\in Y_b$ closed points.  A curve-pair $\mathfrak{m}=(b, [C_0], q, [C_1])$ with $b$ marked on $[C_0]$ is called \emph{good for a section $\sigma$ in $\sections_b^p(Y/S)$} if the following two properties hold
\begin{enumerate}[label=(\roman*)]
\item $\sections_b^p(Y/S)\rightarrow\sections_b^p((Y\times_S \mathfrak{m})/\mathfrak{m})$ is bijective,
\item $\sections_b^p((X\times_{f, Y,\sigma}S)/S)\rightarrow \sections_b^p((X\times_{f, Y, \sigma}S\times_S C_1)/C_1)$ is bijective,
\end{enumerate}
\end{definition}

\begin{definition}\label{goodirreduciblecurves}
Fix $b\in S$ and $p\in Y_b$ closed points.  An irreducible smooth curve $C$ with a marked point $b\in S$ is called \emph{good for a section $\sigma$ in $\sections_b^p(Y/S)$} if the following two properties hold
\begin{enumerate}[label=(\roman*)]
\item $\sections_b^p(Y/S)\rightarrow\sections_b^p((Y\times_S C)/C)$ is bijective,
\item $\sections_b^p((X\times_{f, Y,\sigma}S)/S)\rightarrow \sections_b^p((X\times_{f, Y, \sigma}S\times_S C)/C)$ is bijective,
\end{enumerate}
\end{definition}

\subsection{Proof of the main theorem}

Now, having set all the parameter spaces, we can give the proof of Theorem~\ref{generalizedjasonmaintheorem3}.
The proof is literally the same as the proof \cite{Santai}, Theorem 1.3.  We include the proof
here for completeness of the whole strategy.

\begin{proof}
For convienience, we call an irreducible, smooth, genus-$g_Y$, degree-$d_Y$ curve in $S$ a \emph{Y-curve}
since the data $(g_Y, d_Y)$ depends on the family $Y\to S$.
Denote by $\mathcal{M}_{r, b}$ (resp. $\mathcal{M}'_{r, b}$) the bad set for 
a genus-$rg_Y$, degree-$rd_Y$ curve (resp. curve-pair) containing $b\in S$.

Let $C_1$ be a very general $Y$-curve containing a very general point $b_1\in S$.  
Let $\mathcal{M}_{1,b_1}$ be the image of $\phi_3(\mathcal{D}_{b_1})$, i.e., the set of bad points $p_1\in Y_{b_1}$ for $C_1$.  
Then, $\mathcal{M}_{1,b_1}$ is a proper subset of $Y_{b_1}$.  
Take $C_2$ a very general $Y$-curve intersecting with $C_1$ at a very general point $c_2$, 
and a very general point $b_2$ on $C_2$. 
Denote by $\Delta_1(C_1\cup C_2,b_1)$ the union of the images of $C_1\cup C_2$ 
under bad sections $\sigma$ of $Y$ over $S$ mapping $b_1$ to some $p\in\mathcal{M}_{1,b_1}$.  
Then, $\Delta_1(C_1\cup C_2,b_1)$ is contained in $\rho_Y^{-1}(C_1\cup C_2)$ and $\Delta_1(C_1\cup C_2,b_1)\cap \mathcal{M}_{1,b_1}$ equals $\mathcal{M}_{1,b_1}$.    Define $\Delta_1(C_1\cup C_2,b_2)$ in the same way for points in $\mathcal{M}_{1,b_2}$.

By choosing $C_2$, $c_2$ and $b_2$ very generally, $\Delta_1(C_1\cup C_2,b_2)\cap Y_{b_1}$ 
will intersect $\mathcal{M}_{1,b_1}$ transversally.  Moreover, since $C_1\cup_{c_2}C_2$ is very general, we may assume that 
\[\sections(Y/S)\to\sections(Y_{C_1\cup_{c_2}C_2}/C_1\cup_{c_2}C_2)\]
are bijective by the same argument of \cite{Santai}, Corollary 3.53.  

Let $p$ be a point in $\mathcal{M}_{1,b_1}$, but not in $\Delta_1(C_1\cup C_2,b_2)$.  
Let $\sigma$ be a section in $\sections_{b_1}^p(Y/S)$.  
If $\sigma(b_2)$ does not belong to $\mathcal{M}_{1,b_2}$, $C_1\cup C_2$ is good for $(\sigma, b_1,p)$.  
If $\sigma(b_2)$ is in $\mathcal{M}_{1,b_2}$, then $\sigma(b_1)$ is in $\Delta_1(C_1\cup C_2,b_2)$, 
which contradicts the choice of $p$.  
Thus, $C_1\cup C_2$ is good for every section in $\sections_{b_1}^p(Y/S)$, and $p$ is a good point for this marked curve-pair.  
Now, take a point $p_1'$ in $f^{-1}(p)$ where $p$ is in the set $\Delta_1(C_1\cup C_2,b_2)\cap Y_{b_1}$, but not in $\mathcal{M}_{1,b_1}$.  Denote by $\gamma$ a section of $X$ over $C_1\cup C_2$ mapping $b_1$ to $p_1'$.  Let $p_2'=\gamma(b_2)$.  Denote by $p_1$, resp. $p_2$, the image of $p_1'$, resp. $p_2'$ in $Y$.  Let $\sigma$ be a section of $Y$ over $S$ extending $f\circ\gamma$.  Since $p_1$ is not in $\mathcal{M}_{1,b_1}$, $C_1\cup C_2$ is good for $(\sigma, b_2,p_2)$.  By Lemma~\ref{matching}, $\gamma$ extends to a unique section of $X$ over $S$ mapping $b_2$ to $p_2'$ and $b_1$ to $p_1'$.  Thus, by the second part of Lemma~\ref{matching}, $p$ is a good point for $(b_1,C_1,c_2,C_2)$.

Denote by $\mathcal{M}'_{2,b_1}$ the set of bad points of $(b_1,C_1,c_2,C_2)$.  Then, by the above argument, $\mathcal{M}'_{2,b_1}$ is contained in the intersection of $\Delta_1(C_1\cup C_2,b_2)$ and $\mathcal{M}_{1,b_1}$.  Therefore, $\dim\mathcal{M}'_{2,b_1}$ is strictly less than $\dim\mathcal{M}_{1,b_1}$.  Let $C_{1,2}$ be a very general, genus-$2g_Y$, degree-$2d_Y$, irreducible, smooth curve containing $b_1$.  By Corollary~\ref{inductionI} and Lemma~\ref{matching}, the bad set of points is contained in $\mathcal{M}'_{2,b_1}$.  Denote the bad points for $(C_{1,2},b_1)$ by $\mathcal{M}_{2,b_1}$.  Attach a very general $Y$-curve $C_3$ to $C_{1,2}$ at a very general point $c_3$.  Then inductively, we get a decreasing sequence of dimensions
\[\dim\mathcal{M}_{1,b_1}>\dim\mathcal{M}'_{2,b_1}\ge\dim\mathcal{M}_{2,b_1}>\dim\mathcal{M}'_{3,b_1}\ge\dim\mathcal{M}_{3,b_1}>\cdots.\]
Then, for $g=(r+1)g_Y$ and $d=(r+1)d_Y$ with $r>e$, the bad set for $(b_1,m,c,C)$ of genus-$g$, degree-$d$ is empty, hence every section of $X$ over $C\cup m$ is the restriction of a unique section.  And, by Corollary~\ref{inductionI}, also this is true for very general irreducible smooth curves of genus-$g$, degree-$d$.
\end{proof}

\begin{remark}
If there is a reasonable quotient of the scheme $Y\to S$, 
then we can hope that the theorem also holds for curves of lower degrees, 
see \cite{Santai}, Remark 3.25, for details.
\end{remark}

\section{Results Based on Constant Families}\label{resultsbasedonconstantfamilies}

Take $k$-varieties $A$, $B$ and their fiber product $A\times_k B$.
By a \emph{cross section} of the second projection $A\times_k B\to B$, 
we mean a section $\gamma$ of $A\times_k B\to B$ such that the image of $\gamma$
in $A\times_k B$ projects to a single point in $A$.

\begin{lemma}\label{crosssectionforcurves}
Let $k$ be an algebraically closed field of characteristic zero.  Let $X_0$ be a smooth, projective $k$-variety that does not contain any rational curve.  Let $C$ be a smooth, projective $k$-curve of genus zero.  Then, every section of $\projection_2: X_0\times_k C\to C$ is a cross section.
\end{lemma}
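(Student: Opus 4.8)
The plan is to translate the statement into one about morphisms to $X_0$ and then invoke the absence of rational curves. First I would record the standard identification: a section $\gamma$ of $\projection_2: X_0\times_k C\to C$ is the same data as its first component $f:=\projection_1\circ\gamma: C\to X_0$, since $\gamma\mapsto \projection_1\circ\gamma$ and $f\mapsto (f,\iden_C)$ are mutually inverse bijections between sections of $\projection_2$ and morphisms $C\to X_0$. Under this correspondence $\gamma$ is a cross section exactly when the image $f(C)$ is a single point of $X_0$, that is, when $f$ is constant. So the lemma is equivalent to the assertion that every morphism $f: C\to X_0$ is constant.

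Next I would use that $k$ is algebraically closed and $C$ is a smooth projective curve of genus zero, so $C\cong \mathbb{P}^1_k$. Suppose, for contradiction, that some $f: C\to X_0$ is nonconstant. Since $C$ is projective and $X_0$ is separated, $f$ is proper, so the image $D:=f(C)$ is a closed subvariety of $X_0$; as $C$ is irreducible and $f$ is nonconstant, $D$ is an integral projective curve, and $f$ induces a finite surjective morphism $C\to D$.

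The key step is to show that $D$ is a rational curve. Because $C\cong\mathbb{P}^1_k$ is normal, $f$ factors through the normalization $\nu:\widetilde{D}\to D$, yielding a finite surjective morphism $\phi: C\to\widetilde{D}$ of smooth projective curves. As $\chara k=0$, the map $\phi$ is separable, so the Riemann--Hurwitz formula gives $2g(\widetilde{D})-2\le \deg(\phi)\,(2g(C)-2)=-2\deg(\phi)<0$, forcing $g(\widetilde{D})=0$ and hence $\widetilde{D}\cong\mathbb{P}^1_k$; equivalently, Lüroth's theorem applied to the inclusion $k(D)\hookrightarrow k(C)=k(t)$ shows $k(D)$ is purely transcendental. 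Either way $D$ is a rational curve contained in $X_0$, contradicting the hypothesis that $X_0$ contains no rational curve.

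Therefore every such $f$ is constant, and every section $\gamma$ is a cross section, proving the lemma. I expect the only real content to be this passage from a nonconstant morphism out of $\mathbb{P}^1_k$ to a genuine rational curve in $X_0$ (the normalization plus Riemann--Hurwitz, or Lüroth, argument); once that is in place the conclusion is immediate from the definition of a cross section and the no-rational-curves assumption.
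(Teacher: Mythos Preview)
Your proposal is correct and follows essentially the same route as the paper: translate a section to a morphism $C\to X_0$, assume it is nonconstant, take the image curve and its normalization, and use that a finite cover by a genus-zero curve forces the normalization to be $\mathbb{P}^1_k$, yielding a rational curve in $X_0$ and a contradiction. The only cosmetic difference is that you invoke Riemann--Hurwitz (or L\"uroth) explicitly, whereas the paper cites the corresponding fact from Liu's book.
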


\begin{proof}
Let $\sigma$ be a section of $X_0\times_k C$ over $C$.  Then, $\sigma$ induces a morphism $\sigma_0: C\to X_0$. 
If $\sigma_0$ is constant, then $\sigma$ is a cross section.  
So we assume that $\sigma_0$ is not constant.  
Denote by $C_0$ the image of $\sigma_0$, an integral projective curve on $X_0$.  Let $\nu: C_0^{nor}\to C_0$ be the normalization morphism.  Then, $\sigma_0: C\to C_0$ factors through a unique finite morphism $\pi: C\to C_0^{nor}$ such that $\nu\circ\pi$ equals $\sigma_0$.  Since $C$ is of genus zero, also the genus of $C_0^{nor}$ is zero (\cite{Liuqing}, Cor. 7.4.19, p.291).  And since $k$ is algebraically closed, $C_0^{nor}$ is isomorphic to $\mathbb{P}_k^1$ (\cite{Liuqing}, Prop. 7.4.1, p.285).  As normalization is birational, the composition
\[\mathbb{P}_k^1\simeq C_0^{nor}\overset{\nu}{\xrightarrow{\hspace*{0.5cm}}} X_0\]
gives a rational curve on $X_0$ (\cite{Kollar}, Definition 2.6, p.105), contradicting that $X_0$ does not contain any rational curve.  Therefore, $\sigma_0$ is constant, and $\sigma$ is a cross section. 
\end{proof}

\begin{lemma}\label{crosssectionforbase}
Let $k$ be an algebraically closed field of characteristic zero.  Let $S$ be an open dense subset of $\mathbb{P}_k^n$.  Let $X_0$ be a smooth, projective $k$-variety that does not contain any rational curve.  Then, every section of $\projection_2: X_0\times_k S\to S$ is a cross section.
\end{lemma}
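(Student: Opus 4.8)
The plan is to reduce the higher-dimensional base case $S \subset \mathbb{P}^n_k$ to the genus-zero curve case already established in Lemma~\ref{crosssectionforcurves}. Let $\sigma$ be a section of $\projection_2 \colon X_0 \times_k S \to S$, and let $\sigma_0 \colon S \to X_0$ be the induced morphism obtained by composing $\sigma$ with the first projection. The goal is to show $\sigma_0$ is constant, which is equivalent to $\sigma$ being a cross section. Since $X_0$ contains no rational curve, the strategy is to show that $\sigma_0$ contracts every rational curve in $S$, and then to exploit that $S$, being an open dense subset of $\mathbb{P}^n_k$, is covered by (rational) lines.

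First I would fix any closed point $s \in S$ and a line $\ell \subset \mathbb{P}^n_k$ through $s$; after intersecting with the open dense $S$ we obtain an open subset $\ell \cap S$ of $\mathbb{P}^1_k$. The restriction $\sigma_0|_{\ell \cap S}$ is a morphism from an open subset of $\mathbb{P}^1_k$ into the projective variety $X_0$, so it extends to a morphism $\mathbb{P}^1_k \to X_0$; equivalently, restricting $\sigma$ to the genus-zero curve $C = \ell \cap S$ (or its closure) and applying Lemma~\ref{crosssectionforcurves} directly shows that $\sigma_0$ is constant along $\ell \cap S$. Thus $\sigma_0$ takes a single value on each such line. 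Since any two points of $S$ can be connected by a chain of lines lying in $\mathbb{P}^n_k$ (indeed $\mathbb{P}^n_k$ is covered by lines and is rationally chain connected by lines), and since these lines meet $S$ in open dense subsets that overlap appropriately, the value of $\sigma_0$ is forced to be globally constant on $S$.

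The main technical point to handle carefully is the connectivity argument: although $S$ is only open dense in $\mathbb{P}^n_k$, removing a closed subset does not disconnect the family of lines, so I would argue that for any two closed points of $S$ there is a line (or a short chain of lines) through both whose intersection with $S$ is nonempty and connected enough to propagate constancy. Concretely, given $s, s' \in S$, if the line $\overline{ss'}$ meets $S$ in a connected open set containing both, constancy along that line finishes the comparison; in general one passes through an auxiliary point and uses two lines. Here the genus-zero hypothesis is essential, since Lemma~\ref{crosssectionforcurves} applies to each line, and the absence of rational curves on $X_0$ guarantees $\sigma_0$ cannot vary along any rational curve in $S$.

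The hard part will be ensuring the line-connectivity of $S$ is robust enough, i.e.\ that deleting the closed complement $\mathbb{P}^n_k \setminus S$ still leaves $S$ covered by lines meeting in open dense subsets so that the locally constant value of $\sigma_0$ is genuinely globally constant. This is where the hypothesis $S \subset \mathbb{P}^n_k$ open dense (rather than an arbitrary rational variety) is used: a general line through a fixed point of $S$ meets $S$ in a dense open subset of $\mathbb{P}^1_k$, and the family of such lines sweeps out a dense open subset of $S$, from which constancy on all of $S$ follows by density of $\sigma_0$ being locally constant on a dense open and $S$ being irreducible. Therefore $\sigma_0$ is constant and $\sigma$ is a cross section.
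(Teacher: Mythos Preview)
Your proposal is correct and follows essentially the same approach as the paper: reduce to Lemma~\ref{crosssectionforcurves} by restricting $\sigma$ to lines in $\mathbb{P}^n_k$. The paper is slightly more direct---it simply takes the unique line through two arbitrary closed points $b_1,b_2\in S$, extends the restriction of $\sigma$ to all of $\mathbb{P}^1_k$, and concludes $\sigma_0(b_1)=\sigma_0(b_2)$---so your worries about line-connectivity and chains of lines are unnecessary: once you have extended $\sigma_0|_{\ell\cap S}$ to the whole $\mathbb{P}^1_k$ (as you do), constancy holds on the full line and hence at both endpoints, regardless of whether $\ell\cap S$ is connected.
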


\begin{proof}
Let $b_1$ and $b_2$ be two arbitrary closed points in $S$.  There exists a unique line $L_{1,2}$ containing $b_1$ and $b_2$.  Denote by $\sigma_{1,2}$ the restriction of $\sigma$ on $L_{1,2}$.  Since $L_{1,2}$ is isomorphic to an open dense subset of $\mathbb{P}_k^1$, $\sigma_{1,2}$ extends uniquely to the whole $\mathbb{P}_k^1$.  Therefore, by the same argument as Lemma~\ref{crosssectionforcurves}, $\sigma_{1,2}$ is a cross section.  And since $b_1$ and $b_2$ are two arbitrary closed points in $S$, also $\sigma$ is a cross section.
\end{proof}

Then, Lemma~\ref{crosssectionforcurves} and Lemma~\ref{crosssectionforbase}
give the result of restriction of sections for constant families immediately
since the only sections we have are cross sections.

\begin{proposition}\label{forconstantfamilies}
Suppose that $k$ is an algebraically closed field of characteristic zero, 
$S$ is an open dense subset of $\mathbb{P}_k^n$, 
and $X_0$ is a smooth and projective $k$-variety that does not contain any rational curve.
Then, for every irreducible, smooth and genus-0 curve $C$ in $\mathbb{P}_k^n$, 
the restriction map of sections
\[\sections(X_0\times_k S/S)\to \sections(X_0\times_k C/C)\]
is a bijection.
\end{proposition}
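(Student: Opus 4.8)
The plan is to reduce the statement for $S$ to the statement over a curve, and then invoke the two preceding lemmas directly. Since $C$ is an irreducible, smooth, genus-$0$ curve in $\mathbb{P}_k^n$, it is isomorphic to $\mathbb{P}_k^1$, and $X_0\times_k C\to C$ is the constant (trivial) family with fiber $X_0$. By Lemma~\ref{crosssectionforcurves}, every section of $\projection_2: X_0\times_k C\to C$ is a cross section; equally, by Lemma~\ref{crosssectionforbase}, every section of $X_0\times_k S\to S$ is a cross section. So both source and target of the restriction map consist entirely of cross sections, and the whole problem collapses to understanding how cross sections restrict.

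The key observation is that a cross section of $X_0\times_k S\to S$ is determined by, and determines, a single closed point $x_0\in X_0$: it is the ``constant'' section $s\mapsto (x_0, s)$. The same holds for $X_0\times_k C\to C$. Thus I would set up an explicit bijection between the set of cross sections of either family and the set of closed points $X_0(k)$, sending a cross section to the unique point of $X_0$ in its image. The restriction map $\sections(X_0\times_k S/S)\to\sections(X_0\times_k C/C)$ then simply sends the constant section at $x_0$ over $S$ to the constant section at $x_0$ over $C$, hence corresponds to the identity map on $X_0(k)$ under these identifications.

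Concretely, the main steps are: first, record that $C\cong\mathbb{P}_k^1$ and that $S\subset\mathbb{P}_k^n$ is open dense, so both lemmas apply verbatim; second, apply Lemma~\ref{crosssectionforbase} and Lemma~\ref{crosssectionforcurves} to conclude that every section on each side is a cross section; third, identify each set of cross sections with $X_0(k)$ via the point each cross section picks out; fourth, check that restriction is compatible with these identifications, i.e. restricting a cross section to $C$ preserves the underlying point of $X_0$, which is immediate from the product structure. Injectivity of the restriction map follows because distinct points $x_0\ne x_0'$ give distinct cross sections over $C$, and surjectivity follows because every cross section over $C$ comes from the point it selects, which extends back to a cross section over $S$.

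I expect no serious obstacle here: the content is entirely in the two cross-section lemmas, both already proved, and the rest is a routine unwinding of the product structure. The only point requiring a little care is to confirm that $C$ being genus-$0$, smooth, and irreducible over an algebraically closed field indeed forces $C\cong\mathbb{P}_k^1$ so that Lemma~\ref{crosssectionforcurves} is applicable rather than some variant; this is standard (\cite{Liuqing}, Prop. 7.4.1) and already used in the proof of Lemma~\ref{crosssectionforcurves} itself. Beyond that, the proof is a direct synthesis of the two lemmas.
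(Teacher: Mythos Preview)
Your proposal is correct and matches the paper's own argument essentially verbatim: the paper simply remarks that Lemma~\ref{crosssectionforcurves} and Lemma~\ref{crosssectionforbase} give the result immediately ``since the only sections we have are cross sections,'' and your write-up is a careful unpacking of exactly that sentence.
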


Keep the hypothesis in Proposition~\ref{forconstantfamilies}, let $X$ be a smooth, projective $S$-scheme
admitting a finite morphism $f: X\to X_0\times_k S$.
Denote by $\pi$ the projection from $X_0\times_k S$ to $X_0$.  Let $\rho$ be the structrual morphism of $X$.
\[\xymatrix{
X\ar[r]^{f\,\,\,\,\,\,\,\,\,\,}\ar[rd]_{\rho}  &  X_0\times_k S\ar[r]^{\,\,\,\,\,\,\,\,\,\,\pi}\ar[d]  & X_0  \\
  &  S  & 
}\]

\begin{remark}\label{notverygeneral}
Let $b$ be a closed point of $S$ and $p$ be a closed point of $X_0$.  Since $f$ is finite, there are only finitely many sections of $X$ over $S$ mapping $b$ to $p$.  Similarly, for a smooth, genus-0, degree-$d$ curve $C$ in $S$ containing $b$, there are only finitely many sections of $X$ over $C$ mapping $b$ to $p$.
\end{remark}

Because Proposition~\ref{forconstantfamilies} does not require the very generalness 
of curves and Remark~\ref{notverygeneral} guarantees that we do not have to 
remove countably many closed subsets in $S$ when applying Bertini's theorem to any base change of $f$, 
we do not have to assume that the field $k$ is uncountable, and we can improve the very generalness
to generalness in the following theorem.

\begin{theorem}\label{generalizedjasonmaintheorem2}
Let $k$ an algebraically closed field of characteristic zero.  Let $S$ be an open dense subset of $\mathbb{P}_k^n$.  Let $X_0$ be a smooth, projective $k$-variety that does not contain any rational curve.  Let $X$ be a smooth $S$-scheme admitting a finite $S$-morphism $f: X\rightarrow X_0\times_k S$.  Let $e$ be the dimension of $X_0$.  

Then, for $d>e$, every section of $X_C$ over a general genus-0 and degree-$(d+1)$ curve-pair or a general genus-0, degree-$(d+1)$ smooth curve $C$ is the restriction of a unique global section of $X$ over $S$. 
\end{theorem}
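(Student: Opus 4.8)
The plan is to deduce this statement from Theorem~\ref{generalizedjasonmaintheorem3}, applied with the constant family $Y=X_0\times_k S$ in the role of $Y\to S$, and then to sharpen the conclusion from \emph{very general} to \emph{general} by exploiting that $f$ is finite. First I note that the hypotheses of Theorem~\ref{generalizedjasonmaintheorem3} on the schemes themselves hold: $X_0\times_k S$ is smooth and projective over $S$, and since $f$ is finite (hence projective), $X$ is smooth and projective over $S$ as well.

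To verify the three conditions on the second projection $\projection_2\colon X_0\times_k S\to S$ I would take $(g_Y,d_Y)=(0,1)$ and the compactification $W=\mathbb{P}^n_k$ (so $u_0$ is the inclusion of $S$). Condition (ii) is immediate, as every geometric fiber of $\projection_2$ is a base change of $X_0$ and so contains no rational curve. For condition (iii), Proposition~\ref{forconstantfamilies} gives that for \emph{every} irreducible smooth genus-$0$ curve $C$, in particular a line, the restriction $\sections(X_0\times_k S/S)\to\sections(X_0\times_k C/C)$ is a bijection, which is exactly (iii) for $(g_Y,d_Y)=(0,1)$. For condition (i), the natural candidate is the constant family $X_0\times_k W\to W$: it is smooth and proper, hence normal, and because $X_0$ is proper with no rational curve, any rational map to $X_0$ from a smooth scheme extends over codimension-one points (indeterminacy in codimension one would force a rational curve on $X_0$, in the spirit of Proposition~\ref{extensionnorationalcurves}). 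This yields the weak extension property of Definition~\ref{defweak}, so $X_0\times_k W$ is a normal pseudo-N\'eron model of the generic fiber over $W$.

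With the hypotheses in hand, Theorem~\ref{generalizedjasonmaintheorem3} applies. The fiber dimension of $\projection_2$ is $\dim X_0=e$, and with $g_Y=0$, $d_Y=1$ its conclusion concerns curves of genus $(r+1)g_Y=0$ and degree $(r+1)d_Y=r+1$ for $r>e$. Setting $r=d$ gives, for $d>e$ and a very general genus-$0$, degree-$(d+1)$ curve-pair or smooth curve $C$, the bijectivity of $\sections(X/S)\to\sections(X_C/C)$, which is precisely the extension-and-uniqueness statement claimed.

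The remaining, and genuinely new, step is to pass from \emph{very general} to \emph{general} and to discard the uncountability of $k$. Here I would trace the proof of Theorem~\ref{generalizedjasonmaintheorem3} and isolate every place where a countable family of closed subsets is excised; these all stem from the discreteness results (Lemma~\ref{sectionsovergeneralcurve}, Lemma~\ref{sectionsoverbase}), which only bound the fibres of the section spaces over a fixed pair $(b,p)$ by a countable set. In the present situation $f$ is finite, so by Remark~\ref{notverygeneral} there are only \emph{finitely many} sections of $X$ over $S$, resp. over a genus-$0$ curve, mapping $b$ to $p$, and the base case supplied by Proposition~\ref{forconstantfamilies} holds for \emph{all} genus-$0$ curves rather than merely very general ones. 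Consequently, in building the bad set $\mathcal{D}_b$ and running the dimension-dropping induction, each application of the Bertini-type result (\cite{Santai}, Theorem 3.17) to a base change of $f$ removes only finitely many honest closed subsets instead of countably many. The bad locus is then a genuine proper closed subset whose complement contains a dense open set, so a \emph{general} curve or curve-pair suffices and $k$ need not be uncountable. I expect this bookkeeping, checking that under the finiteness of $f$ every countable removal in the general argument collapses to a finite one, to be the main point that needs care.
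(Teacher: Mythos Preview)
Your proposal is correct and follows essentially the same approach as the paper: apply Theorem~\ref{generalizedjasonmaintheorem3} to the constant family $Y=X_0\times_kS$ with $(g_Y,d_Y)=(0,1)$, and then upgrade ``very general'' to ``general'' (and drop uncountability of $k$) using the finiteness of $f$ via Remark~\ref{notverygeneral} together with the fact that Proposition~\ref{forconstantfamilies} holds for all genus-$0$ curves. The only cosmetic difference is that the paper dispatches condition~(i) by citing \cite{Santai}, Corollary~2.8, whereas you supply the direct argument that $X_0\times_kW$ is a normal pseudo-N\'eron model; your reasoning there is the expected one and matches what that corollary provides.
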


\begin{proof}
This follows directly from the general result Theorem~\ref{generalizedjasonmaintheorem3}
and \cite{Santai}, Corollary 2.8.
\end{proof}

\begin{corollary}\label{hypersurfaces}
Let $k$ be an uncountable algebraically closed field.  Let $S$ be an open dense subset of $\mathbb{P}_k^n$.  Let $X_0$ be a very general hypersurface of degree $\ge 2N-1$ in $\mathbb{P}^N_k$.  Let $X$ be a smooth $S$-scheme admitting a finite $S$-morphism $f: X\rightarrow X_0\times_k S$.   

Then, for $d\ge N$, every section of $X_C$ over a general genus-0 and degree-$(d+1)$ curve-pair or a general genus-0, degree-$(d+1)$ smooth curve $C$ is the restriction of a unique global section of $X$ over $S$.
\end{corollary}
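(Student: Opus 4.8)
The plan is to deduce this directly from Theorem~\ref{generalizedjasonmaintheorem2}, so that the only substantive point is to check that the fixed fiber $X_0$ meets the hypotheses of that theorem. First I would record the elementary structural facts: a very general hypersurface $X_0\subset\mathbb{P}^N_k$ of degree $\ge 2N-1$ is smooth and projective, and has dimension $\dim X_0 = N-1$. Since the corollary assumes $k$ uncountable, the phrase \emph{very general} is meaningful, and we are free to impose countably many open dense conditions on the parameter space of hypersurfaces.

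The key geometric input is that a very general hypersurface of degree $\ge 2N-1$ in $\mathbb{P}^N_k$ contains no rational curve. This is a classical non-existence theorem valid precisely in this degree range, where the positivity of the canonical and cotangent sheaves of the hypersurface is strong enough to rule out nonconstant morphisms from $\mathbb{P}^1$. It is exactly here that uncountability of $k$ is used: the locus in the space of hypersurfaces whose members carry a rational curve is a countable union of proper closed subvarieties of the parameter space, and avoiding all of them simultaneously requires $k$ uncountable. I would invoke this statement as a black box, as it is the one nontrivial ingredient not already supplied by the earlier results in the paper.

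With such an $X_0$ fixed, set $e=\dim X_0 = N-1$. The hypotheses of Theorem~\ref{generalizedjasonmaintheorem2} then hold verbatim: $S$ is open dense in $\mathbb{P}^n_k$, $X_0$ is smooth projective with no rational curves, and $f\colon X\to X_0\times_k S$ is a finite $S$-morphism from the smooth $S$-scheme $X$. The numerical condition $d>e$ of that theorem reads $d>N-1$, which for integral $d$ is exactly $d\ge N$, the range asserted here. Applying Theorem~\ref{generalizedjasonmaintheorem2} then gives that for a general genus-$0$, degree-$(d+1)$ curve-pair, or a general genus-$0$, degree-$(d+1)$ smooth curve $C$ in $S$, every section of $X_C$ over $C$ is the restriction of a unique global section of $X$ over $S$, which is the claim.

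The main obstacle is concentrated entirely in the second paragraph, namely the existence of a smooth projective $X_0$ with no rational curves in the degree range $\ge 2N-1$; everything else is a bookkeeping specialization, the only arithmetic check being the translation $d>e=N-1 \iff d\ge N$. I would therefore expend no effort on the restriction-of-sections machinery itself and simply cite the non-existence of rational curves on very general high-degree hypersurfaces, after which the corollary is immediate from Theorem~\ref{generalizedjasonmaintheorem2}.
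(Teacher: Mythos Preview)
Your proposal is correct and follows precisely the paper's own argument: the paper also deduces the corollary immediately from Theorem~\ref{generalizedjasonmaintheorem2} together with the cited result that a very general hypersurface of degree $\ge 2N-1$ in $\mathbb{P}^N_k$ contains no rational curves, noting that uncountability of $k$ is required for the latter. Your explicit check that $e=\dim X_0=N-1$ translates the condition $d>e$ into $d\ge N$ is a helpful detail the paper leaves implicit.
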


\begin{proof}
This follows from Theorem~\ref{generalizedjasonmaintheorem2} and \cite{nocurves}, Theorem 1.2.
Note that we have to assume that the field $k$ is uncountable to apply \cite{nocurves}, Theorem 1.2.
\end{proof}

\begin{remark}
This is the new example of pseudo-N\'eron models provided in \cite{Santai}, Theorem 1.9.
We see that the existence of pseudo-N\'eron model also gives the theorem of 
restriction of sections for this example.
\end{remark}

\section{Examples with Rational Curves}\label{examplewithrationalcurves12}

\subsection{Examples with non-proper rational curves}

In this section, we give the proof of Theorem~\ref{examplerc1}.
Suppose that $k$ is an uncountable algebraically closed field of characteristic zero.
Let $S$ be an open dense subset of $\mathbb{P}_k^r$, $r\ge 2$, of codimension at least two.
Let $A$ be an Abelian scheme over $S$ of relative dimension two, that is,
$A$ is a family of Abelian surfaces over $S$.  Denote by $\iota$ the involution of $A\to S$.

Denote by $Y$ the quotient $A/\iota$.  Then, every closed fiber of $Y\to S$ has only 16 rational 
double point singularities, which gives $Y\to S$ 16 sections as singular locus of $Y$.
Denote by $X$ the complement of these 16 sections in $Y$.
Let $A'$ be the complement in $A$ of the inverse image of these 16 sections via $A\to Y$.
Then, $A'\to X$ is an \'etale double cover.
Let $Y'$ be the minimal resolution of $Y$.  The fibers of the projective 
family $Y'$ are projective smooth Kummer surfaces.
We include the following diagram to clarify the situation.

\[\xymatrix{
  &  Y'\ar[d]^{\text{blowing\,\,\,up}}  \\
A\ar[r]^{\text{quotient}}   &  Y  \\  
A'\ar[r]_{\text{\'etale}}\ar@{^{(}->}[u]^{\text{open}}  &  X\ar@{^{(}->}[u]_{\text{open}}
}\]

\begin{proposition}\label{exampleswithraitonalcurves}
For a very general line-pair, or a very general smooth conic curve $C$ in $S$, 
the restriction map of sections
\[\sections(X/S)\to \sections(X_C/C)\]
is bijective.  

Moreover, for every $k$-point $s\in S$, $X_s$ admits infinitely many 
non-proper rational curves.
\end{proposition}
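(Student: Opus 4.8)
The plan is to split the statement into the two assertions and handle them separately. For the bijectivity of restriction of sections, the strategy is to reduce the problem for $X$ to a problem for the Abelian scheme $A$, where we may invoke Theorem~\ref{maintheoremjason} of Graber--Starr. The key geometric input is the étale double cover $A'\to X$ together with the involution $\iota$. First I would show that the set of sections $\sections(X/S)$ is governed by $\sections(A/S)$: a section of $X$ over $S$ lifts, after the étale base change $A'\to X$, to data on $A'$, and because $A'$ is an open dense subscheme of the Abelian scheme $A$ whose complement is contained in the union of finitely many codimension-one-or-higher subsets, sections extend across (using that $S$ has codimension at least two in $\mathbb{P}^n_k$ and the extension property for Abelian schemes, Proposition~\ref{extensionnorationalcurves}-type arguments). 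Thus a section of $X$ over $S$ corresponds to an $\iota$-compatible section of $A$ over $S$, and similarly over the curve $C$.

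The heart of the argument is then the following commutative square of restriction maps,
\[
\xymatrix{
\sections(A/S)\ar[r]\ar[d] & \sections(A_C/C)\ar[d]\\
\sections(X/S)\ar[r] & \sections(X_C/C).
}
\]
Theorem~\ref{maintheoremjason} gives that the top horizontal arrow is a bijection for $C$ a very general line-pair or smooth conic. I would argue that the vertical arrows, which record passage to the $\iota$-quotient away from the fixed locus, are compatible with the horizontal restriction maps, and that bijectivity descends from $A$ to $X$. Injectivity of the bottom arrow follows because two sections of $X$ agreeing on $C$ lift to sections of $A$ (or of $A'$) agreeing on the preimage of $C$, whence they coincide by injectivity for $A$; surjectivity follows because a section of $X_C$ over $C$ lifts to a section of $A_C$ over $C$, extends to a section of $A$ over $S$ by the top arrow, and this extension is automatically $\iota$-equivariant (the equivariance is a closed condition that holds over the dense curve $C$, hence everywhere by separatedness), so it descends to the desired global section of $X$.

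For the second assertion, that each fiber $X_s$ carries infinitely many non-proper rational curves, the plan is to use the geometry of the Kummer construction directly on the fiber. The Abelian surface $A_s$ carries infinitely many elliptic curves (translates of abelian subschemes, or images of the infinitely many isogenies / divisor classes), and the quotient $A_s/\iota$ together with its sixteen rational double points produces rational curves in $Y_s$; removing the sixteen singular sections to form $X_s$ leaves the smooth loci of these curves, which are non-proper rational curves. Concretely, I would exhibit a one-parameter (indeed infinite) family of genus-one curves on $A_s$ passing through the two-torsion, whose images in $Y_s$ are rational (a genus-one curve invariant under $\iota=[-1]$ maps to a $\mathbb{P}^1$), and note that these rational curves meet the deleted singular sections, so their intersections with $X_s$ are non-proper. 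The main obstacle I anticipate is the first assertion's descent step: making precise that sections of $X$ correspond exactly to $\iota$-equivariant sections of $A$ and that this correspondence is compatible with restriction to $C$, since the étale cover $A'\to X$ and the deletion of the sixteen two-torsion sections must be controlled carefully, using that those sections lie over a proper closed subset and that the codimension-two hypothesis on $S$ in $\mathbb{P}^n_k$ lets Néron-type extension close the gaps.
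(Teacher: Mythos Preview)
Your overall strategy for the bijectivity assertion---reduce to the Abelian scheme $A$ via the \'etale double cover $A'\to X$ and invoke Theorem~\ref{maintheoremjason}---matches the paper's, but two of the key steps are not correctly identified.

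First, the lifting step. You assert that a section $\gamma$ of $X_C$ over $C$ lifts to a section of $A_C$ over $C$, but you do not say why. Pulling back the \'etale double cover $A'\to X$ along $\gamma$ gives an \'etale double cover of $C$; in general this need not split. The paper's argument uses that the codimension-$\ge 2$ hypothesis on $S\subset\mathbb{P}^n_k$ forces a very general conic $C$ to be \emph{complete}, hence $C\cong\mathbb{P}^1_k$, hence simply connected, so the double cover $A'_C\to C$ is two disjoint copies of $\mathbb{P}^1_k$ and $\gamma$ lifts to two sections $\gamma_1,\gamma_2$. You invoke the codimension hypothesis only for a N\'eron-type extension argument, which is not where it is actually used. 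Second, your ``$\iota$-equivariance'' formulation is off: a single section $\sigma$ of $A/S$ is $\iota$-invariant only if it is $2$-torsion, which is precisely the locus that has been removed. What the paper shows is that $\iota$ \emph{swaps} the two lifts $\gamma_1,\gamma_2$, hence by uniqueness of the global extension $\iota(\sigma_1)=\sigma_2$, so their images in $X$ coincide and give a single well-defined section $\tau$ containing $\gamma$.

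For the second assertion your proposed construction does not work in general: a very general Abelian surface $A_s$ is simple and contains no elliptic curves whatsoever, so there is no ``infinite family of genus-one curves on $A_s$'' to quotient. The paper instead cites the theorem of Bogomolov--Hassett--Tschinkel that the smooth Kummer surface $Y'_s$ (the minimal resolution of $Y_s$) carries infinitely many rational curves; pushing these down to $Y_s$ and noting that each must meet one of the sixteen singular points yields infinitely many non-proper rational curves on $X_s$.
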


\begin{proof}
For every $k$-point $s\in S$, $Y'_s\to Y_s$ is the blowing up at 16 singularities, which gives 16 disjoint
rational curves on $Y'_s$.  By \cite{bogomolov}, Example 5, the Kummer surface $Y'_s$ admits an infinite number of rational curves.  
So there must be infinitely many rational curves on $Y'_s$ different from the 16 distinguished rational curves.  
Since blowing up is proper and birational, there are also infinitely many 
rational curves on $Y_s$.  However, every rational curve different from the 16 distinguished rational curves
has to intersect at least one of the 16 distinguished rationa curves.  Thus, there are
infinitely many non-proper rational curves on $X_s$.

Let $C$ be a very general smooth conic curve in $S$.  Since $S$ is an open dense subset of $\mathbb{P}_k^r$
with codimension at least two, $C$ is a complete rational curve in $S$.  Suppose that $\gamma$ is a section 
of $X_C/C$.  Consider the following Cartesian diagram.

\[\xymatrix{
A'_C\ar[r]\ar[d]  &  C\ar[d]^{\gamma}  \\
A'\ar[r]  &  X  
}\]
Since $A'_C\to C$ is an \'etale double cover and $C$ is isomorphic to $\mathbb{P}_k^1$,
$A'_C$ consists of two copies of $\mathbb{P}_k^1$, 
each of which is mapped identically onto $C$.
Thus, $\gamma$ lifts to two sections, $\gamma_1$ and $\gamma_2$, of $A'$ over $C$, hence two sections of $A_C/C$.
Every member of these two sections over $C$ is contained in a unique section of $A/S$, say, $\sigma_1$ and $\sigma_2$,
because $A/S$ is an Abelian scheme and $C$ is very general (\cite{GJ}, Theorem 1.3, p.312).  
The restrictions $\sigma_1|_{A'}$ and $\sigma_2|_{A'}$
are mapped to sections $\tau_1$ and $\tau_2$ of $X/S$, both of which contain $\gamma$.
The involution $\iota$ mappes $\gamma_1$ to $\gamma_2$.  By the uniqueness of the extended
global section, we have that $\iota(\sigma_1)$ equals $\sigma_2$.  As a result, the sections $\tau_1$
and $\tau_2$ are same.  Therefore, $\gamma$ is contained in the unique section $\tau_1=\tau_2$.

Now, suppose that $C$ is a line-pair.  Consider every component of $C$ and use the same argument as
above, the restriction map of sections is also bijective.
\end{proof}

\subsection{Examples with complete rational curves}

The example in this section proves Theorem~\ref{examplerc2}.
Let $k$ be an uncountable algebraically closed field of characteristic zero.
Let $U$ and $B$ be smooth, irreducible and quasi-projective $k$-schemes of dimension $\ge 2$.
Assume that $u_0: U\to \mathbb{P}_k^n$ is a generically finite and generically \'etale
morphism onto an open dense subset of $\mathbb{P}_k^n$.
Suppose that $f_0: B\to U$ is a finite morphism of degree two.

Denote by $A$ an Abelian scheme of relative dimension two over $B$.  
Let $A'$ be the Abelian scheme $A\times_B U$, which
has a finite morphism of degree two to $A$, i.e., $f: A'\to A$ is a two-to-one cover.
Let $X$ be the quotient of $A'$ by the involution action on $A'$.
Thus, by construction, every geometric fiber of $X\to B$ is a singular Kummer surface
with 16 rational double points.

Take a conic curve in $\mathbb{P}_k^n$ and denote its inverse image in $B$ (resp. $U$)
by $C$ (resp. $C_2$).  Let $\gamma$ be a section of $X$ over $C$.
Denote by the base change of $C$ (resp. $\gamma$) to $A'$ by $C'$ (resp. $\gamma'$).
So both $\gamma$ and $\gamma'$ are closed immersions and $C'\to C$ is
a two-to-one cover of curves.  
However, since $C'$ is not a curve in $B$, $\gamma'$ does not
give a section of $A'$ over a curve in $B'$.
We summarize the notations in the following diagram,
where the squares are Cartesian.

\[\xymatrix{
C\ar[d]_{\gamma}  &  C'\ar[l]\ar[d]_{\gamma'}  &  \\
X\ar[ddr]  &  A'\ar[l]\ar[r]^f\ar[dd]  &  A\ar[d]  \\
   &   &  B\ar[d]^{f_0}   \\
   &  B\ar[r]^{f_0}  &  U
}\]
Denote by $C_1$ the image curve $f\circ \gamma'(C')$,
which is mapped onto $C$ via $A\to B$.  
Moreover, since $C'\to C$ and $C'\to C_1$ are double covers, 
the curves $C_1$ and $C$ must be isomorphic.
Thus, we have a section $\gamma_1: C\to A$ such that the following diagram is Cartesian.

\[\xymatrix{
C'\ar[r]^{f\circ \gamma'}\ar[d]_{\gamma'}  &  C\ar[d]^{\gamma_1}  \\
A'\ar[r]^f  &  A
}\]
Now, since $C$ is a conic in $B$ (i.e., a line-pair or a smooth conic), we can take $C$ to 
be very general such that $\gamma_1$ is contained in a unique global section $\sigma_1$ of $A\to B$.
The base change $B'=A'\times_{f,A,\sigma_1} B$ is a closed subscheme of $A'$ that contains
the curve $C'$.  Note that $B'$ is mapped onto $B$ via $A'\to B$ and it is a double cover of $B$.

The involution of $A'$ restricts to an automorphism of $C'$.  So, also the involution of $A$ restricts
to an automorphism of $C$.  By the uniqueness of the extended global section, the involution on $A$
gives an automorphism of the image of $\sigma_1$ in $A$.  Therefore, the involution on $A'$ 
restricts to an automorphism of the closed immersion $B'\to A'$.  Then, the quotient $B'/\iota$
exists as a closed subscheme of $X$.  By construction, $B'/\iota$ is mapped isomorphically to $B$
via $A\to B$.  So, $B'$ gives a section of $X\to B$ that contains $\gamma$. 
Moreover, by chasing the diagram, such a global section must be unique.

\section{Application in Hodge Theory}\label{applicationinhodgetheory}

We work over the field of complex numbers in this section.
Let $p: X\to S$ be a smooth, projective morphism as in Theorem~\ref{h1}.
By the BBD's decomposition theorem, we have the uncanonical isomorphism

\[\text{R}p_*\mathbb{Q}\simeq \bigoplus_{i\in \mathbb{Z}} (\text{R}^ip_*\mathbb{Q})[-i].\]
Thus, there is an isomorphism in the category of mixed Hodge structures

\[\text{H}^{2n}(X,\mathbb{Q})=\text{H}^{2n}(S, \text{R}p_*\mathbb{Q})
\simeq \bigoplus_{i\in \mathbb{Z}} \text{H}^{2n-i}(S, \text{R}^ip_*\mathbb{Q}).\]
Note that $\text{Hdg}^n(X)$ is defined to be $\text{H}^{2n}(X,\mathbb{Q})\cap \text{H}^{n,n}(X)$.

Now, we give the proof of Theorem~\ref{h1}.

\begin{proof}
Let $\text{Z}(X)$ (resp. $\text{Z}(X_C)$) be the group of algebraic cycles on $X$ (resp. $X_C$) with coefficients in $\mathbb{Q}$.
Denote by $\text{Z}(X/S)$ (resp. $\text{Z}(X_C/C)$) the subgroup of $\text{Z}(X)$ (resp. $\text{Z}(X_C)$)
that is generated by the images of sections of $X\to S$ (resp. $X_C\to C$).
Denote by $\text{Hdg}^1(\mathbb{V})$ the Hodge classes of type $(n,n)$ in $\text{H}^1(S, \mathbb{V})$.
Similarly, we can define $\text{Hdg}^1(\mathbb{V}|_C)$.  Consider the following diagram,
\[\xymatrix{
\text{Z}(X/S)\ar[d]\ar[r]  &  \text{Z}(X_C/C)\ar[d]      \\
\text{Hdg}^1(\mathbb{V})\ar[r]  &  \text{Hdg}^1(\mathbb{V}|_C)
}\]
where the vertical maps are cycle class maps via Leray spectral sequence.
By Lefschetz's theorem on $(1,1)$-classes, the vertical maps are surjective.
Moreover, by hypothesis, the top horizontal row is a bijection, so the restriction map
$$\text{Hdg}^1(\mathbb{V})\to \text{Hdg}^1(\mathbb{V}|_C)$$
is surjective.  Note that this map is also injective (\cite{najm}, Remark 2.7), 
so we complete the proof.
\end{proof}

\section{Further Questions}

Here are several further questions.

\begin{question}
Keep the notations in Theorem~\ref{generalizedjasonmaintheorem}.  Assume that $S$ is an integral, normal, quasi-projective $k$-scheme of dimension $b\ge 2$ admitting a finite \'etale morphism onto an open dense subset of $\mathbb{P}_k^n$.  Does the result in Theorem~\ref{generalizedjasonmaintheorem} still hold?
\end{question}

The problem is that there might be sections of $X_0\times_k S$ that are not cross sections.  Let $u_0: S\to \mathbb{P}_k^n$ be a finite \'etale morphism.  Let $C$ be a smooth, genus zero curve.  However, $u_0^{-1}(C)$ is not necessarily of genus zero.  If $S$ can be covered by smooth, genus zero curves, then the result in Theorem~\ref{generalizedjasonmaintheorem} still hold since every section of $X_0\times_k S\to S$ is a cross section by the same argument of Lemma~\ref{crosssectionforbase}.  If $S$ can not be covered by smooth, genus zero curves, since there is no group structure on $X_0$, the operator of taking difference does not apply as in \cite{GJ}.  
In other words, the problem about restriction of sections depends on 
the morphism $X\to S$, existence of pseudo-N\'eron model and the base $S$.

\begin{question}\label{questionofexamples}
Theorem~\ref{examplerc1} gives an example that is quasi-projective and smooth.  
Theorem~\ref{examplerc2} gives the example that is projective but singular along
some sections.  Do we have examples $X\to S$ that is smooth, projective, 
there are rational curves on geometric fibers, and the theorem of restriction 
of sections is true?
\end{question}

\begin{question}\label{questionabouthodgetheory}
Theorem~\ref{h1} gives the result about Hodge classes of weight $(n,n)$.
Then, how about Hodge classes of other weights?  Does the theorem of 
restriction of sections also give the same results as in Theorem~\ref{h1}
for some Hodge classes of other weights?
\end{question}

\vspace{1em}

\noindent\small{\textsc{Mathematics Department, Stony Brook University, Stony Brook, NY, 11794} }

\noindent\small{Email: \texttt{santai.qu@stonybrook.edu}}

 \end{document}